\theoremstyle{plain}
\newtheorem{theorem}{Theorem}
\newtheorem{conjecture}[theorem]{Conjecture}
\newtheorem{proposition}[theorem]{Proposition}
\newtheorem{lemma}[theorem]{Lemma}
\newtheorem{corollary}[theorem]{Corollary}
\theoremstyle{definition}
\newtheorem{definition}[theorem]{Definition}
\newtheorem{claim}{Claim}[theorem]
\title{Contracting a Single Element in a Transversal Matroid}
\author{Sam Bastida}
\DeclareMathOperator{\down}{down}
\begin{document}

\maketitle

\begin{abstract}
    It is well known that the class of transversal matroids is not closed under contraction or duality. In particular, after contracting a set of elements from a transversal matroid, the resulting matroid may or may not be transversal, and the computational complexity of determining whether it is transversal is not well understood. As a step toward resolving this, we provide a polynomial-time algorithm for determining whether a single-element contraction of a transversal matroid is transversal. In the case that the single-element contraction is transversal, the algorithm also provides a transversal representation. We then discuss possible applications of our techniques toward finding a polynomial-time algorithm to determine if the dual of a transversal matroid is transversal.
\end{abstract}

\section{Introduction}

\subsection{Motivation}

Transversal matroids are an important class of matroids. In introductory texts to matroid theory the theory of transversal matroids is often well developed \cite{welsh}, and they are regularly given as an example of naturally arising matroids alongside the original classes, representable and graphic matroids \cite{oxley}. However, there are some key differences that separate transversal matroids from these other two fundamental classes. Firstly, representable and graphic matroids are closed under minors while transversal matroids are not. Furthermore, obtaining representations of minors of representable or graphic matroids can be done in polynomial time while no polynomial-time algorithm is known that determines if a minor of a transversal matroid is even transversal or not, nevermind one that obtains a representation. Secondly, representable matroids are closed under duality and representations of their duals can, once again, be found in polynomial time. Graphic matroids are not closed under duality, however, the class of graphic matroids which are also co-graphic is well understood and such matroids can be recognised, and representations of their duals found, in polynomial time. Currently there is very little understanding of which transversal matroids are also co-transversal and of course there does not exist a polynomial-time algorithm to find transversal representations of duals of transversal matroids.

These ideas immediately give rise to the following natural problems. In all such problems where our problem instance contains a transversal matroid we will assume that the transversal matroid is represented by a bipartite graph in the usual way. \\
\\
\textsc{Transversal Contraction}\\
\textsc{Instance}: A transversal matroid $M$ and a subset $X \subseteq E(M)$.\\
\textsc{Question}: Is $M/X$ transversal?\\
\\
\\
\textsc{Transversal Dual}\\
\textsc{Instance}: A transversal matroid $M$.\\
\textsc{Question}: Is $M^*$ transversal?\\
\\
These are questions that have remained unresolved for a long time: Welsh asked in 1971 \cite[Problem 29]{DominicProblems} for a characterisation of the matroids that are both transversal and co-transversal; while Bonin, in 2010, asked the same thing \cite[Open Problem 6.1]{jobonotes}. We are interested in a slightly different, but related question: whether there even exists a polynomial-time algorithm for determining if a matroid is transversal and co-transversal. As we explain in Section 4 a polynomial-time algorithm that solves \textsc{Transversal Contraction} would provide a polynomial-time algorithm that solves \textsc{Transversal Dual}.

We work toward answering these questions by addressing the simplest case of \textsc{Transversal Contraction}, when $X$ consists of a single element.\\
\\
\textsc{Single Element Transversal Contraction}\\
\textsc{Instance}: A transversal matroid $M$ and an element $e \in E(M)$.\\
\textsc{Question}: Is $M/e$ transversal?\\
\\
We provide a polynomial-time algorithm that solves this problem and, if $M/e$ is determined to be transversal, our algorithm provides a bipartite graph representation. Therefore we obtain the following theorem.

\begin{theorem}
\label{theorem 1}
    There exists a polynomial time algorithm that solves \textsc{Single Element Transversal Contraction}. If this algorithm obtains a positive answer then it also provides the transversal representation of $M/e$.
\end{theorem}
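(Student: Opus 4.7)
The plan is to pass to the dual, exploiting that $(M/e)^{*} = M^{*} \setminus e$ and that the class of co-transversal matroids coincides with the class of strict gammoids, as noted in the Strategy subsection. Thus $M/e$ is transversal if and only if $M^{*} \setminus e$ is a strict gammoid, i.e.\ admits a digraph representation whose vertex set is exactly the ground set $V_{0} \setminus \{e\}$. As a first step I would translate the bipartite graph $G$ into a strict gammoid representation $(D,T)$ of $M^{*}$ on vertex set $V_{0}$ using the standard (polynomial) Ingleton--Piff construction. Note that $(D,T)$ already represents $M^{*} \setminus e$ as a gammoid; the question is whether we can \emph{eliminate} the single vertex $e$ from this representation.

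Next I would reduce Theorem \ref{theorem 1} to the vertex-elimination problem described at the end of Subsection~1.2: given a gammoid representation that uses one extra vertex, decide whether a representation on the ground set alone exists, and if so produce it. The natural candidate replacement is obtained by short-circuiting $e$: for every two-arc path $u \to e \to v$ with $u,v \neq e$, add the arc $u \to v$, then delete $e$, updating the terminal set $T$ if $e \in T$ by promoting a suitable in-neighbour. Call the resulting digraph $D'$. Once we have any candidate digraph we can, as noted in the introduction, certify matroid equality with $M^{*} \setminus e$ in polynomial time by comparing bases; if equality holds we dualise $(D',T')$ back to a bipartite graph to output the required transversal representation of $M/e$.

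The main obstacle lies in the failure case: we must prove that if the naive short-circuit fails to represent $M^{*} \setminus e$, then \emph{no} strict gammoid representation on $V_{0} \setminus \{e\}$ exists. The subtlety is that two paths $u_{1} \to e \to v_{1}$ and $u_{2} \to e \to v_{2}$ share the interior vertex $e$ and so are not vertex-disjoint in $D$, yet their shortcuts $u_{1} \to v_{1}$ and $u_{2} \to v_{2}$ \emph{are} vertex-disjoint in $D'$, which can create spurious linkages and enlarge the gammoid. The structural heart of the proof will therefore be a characterisation of exactly which shortcut arcs are safe to add, phrased in terms of the linkage structure at $e$ in $(D,T)$; this should simultaneously (a)~define a small, polynomially enumerable family of local modifications of $(D,T)$ that contains every admissible elimination of $e$, and (b)~certify, when none of these modifications succeeds, that $M^{*} \setminus e$ is not a strict gammoid and hence $M/e$ is not transversal.

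Combining these pieces, the algorithm runs in polynomial time: compute $(D,T)$, enumerate the polynomially many candidate local modifications around $e$, test each by the basis-comparison check, and either return a successful $D'$ (dualised to a transversal representation of $M/e$) or report that $M/e$ is not transversal. The correctness of the report relies entirely on the structural characterisation above, which is where I expect the real work of the proof to lie.
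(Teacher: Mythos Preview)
Your reduction to the strict gammoid deletion problem is correct and matches the paper's strategy. However, two steps in your plan have genuine gaps.

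First, you claim that once a candidate digraph $D'$ is constructed, ``we can certify matroid equality with $M^{*}\setminus e$ in polynomial time by comparing bases''. This misreads the introduction: what is said there is that one can check whether a \emph{single} set $B$ is a basis of each matroid in polynomial time, and it is precisely the universal quantifier over all $B$ that places the problem in $\Sigma_{2}$. No general polynomial-time equality test for two gammoids given by digraph representations is available to you. The paper circumvents this by constructing the candidate $M'$ in a very specific way (from the cyclic flats of $M$ with positive $\gamma_{M}$ value, via Corollary~\ref{construct gammoid}) and then proving, in Lemma~\ref{final lemma}, that $M=M'$ if and only if ranks and closures agree on the unions of \emph{pairs} of such cyclic flats, a polynomially bounded family of tests.

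Second, and more seriously, the ``structural characterisation of exactly which shortcut arcs are safe to add'' is left entirely open; you yourself say this is ``where the real work lies'', but that \emph{is} the content of the theorem. The paper does not reason about local digraph surgery around $e$ at all. It works instead with the $\gamma$ function of Corollary~\ref{gamma}: a matroid is a strict gammoid iff $\gamma(X)\ge 0$ for every subset $X$. The technical core consists of Lemma~\ref{meet lemma} and Corollaries~\ref{contra} and~\ref{cyclic contra}, which control how $\Delta\gamma(Z)=\gamma_{M}(Z-e)-\gamma_{M^{+}}(Z)$ propagates through the lattice of cyclic flats, and which yield Theorem~\ref{algorithm theorem}: to detect a negative $\gamma_{M}$ value on cyclic flats it suffices to examine a collection $\mathcal{E}$ of size $O(n^{4})$, built from iterated pairwise joins of the $O(n)$ cyclic flats with positive $\gamma_{M^{+}}$. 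Lemma~\ref{final lemma} then handles the case where all cyclic flats pass but some non-flat set fails. None of this lattice and $\gamma$-function machinery is present in your short-circuit approach, and there is no evident way to extract a completeness guarantee (item~(b) in your outline) from purely local arc manipulations at $e$.
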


We will discuss the underlying complexity theory of the problem in more detail in Section 2 but we give a brief description here to show that this problem is in $\Sigma_2$. We assume that $e$ is not a loop of $M$ as then $M/e=M\backslash e$ which is trivially transversal. We are able to check, in polynomial time, if a set $B \subseteq E(M)-e$ is a basis of $M/e$ by simply checking if $B \cup e$ is a basis of $M$. Similarly for any transversal matroid, $M_T$, with a transversal representation, we can check if $B$ is a basis of $M_T$. This means that if we are given a transversal matroid, $M_T$, on the same ground set as $M/X$ we can check in polynomial time whether a basis of one is a basis of the other. In order for us to obtain a positive answer to \textsc{Transversal Contraction} there must exist a matroid, $M_T$, such that for all $B$ we have that $B$ is not a basis of exactly one of $M/X$ or $M_T$. This alternation of quantifiers puts \textsc{Transversal Contraction} into $\Sigma_2$ \cite{npcompleteness}. Whether the problem is in fact in $NP$, or perhaps even in $P$, remains open.

\subsection{Strategy}
Our approach to obtaining a polynomial time algorithm will examine the duals of transversal matroids. The class of co-transversal matroids is exactly the class of strict gammoids \cite{INGLETON197351}. Strict gammoids have their own graphical representation \cite{mason} which we shall discuss in Section 2.

Instead of working directly with transversal representations we will work with strict gammoids. This allows us to examine restrictions of strict gammoids instead of contractions of transversal matroids. Restrictions of strict gammoids are called gammoids and they have their own graphical representation. This allows us to quickly determine any matroid properties such as independence or closure that we might need from the restricted matroid for our algorithm.

Gammoids are interesting in their own right as they are closed under minors and since any transversal matroid is also a gammoid \cite{oxley} they are a minor closed class containing transversal matroids. In fact they are exactly the class of transversal matroids and their minors \cite{oxley}. Representing gammoids graphically can be a challenge though as it was only recently that a bound was put on the size of a graphic representation of a gammoid \cite{6375323}. Our work approaches this problem from another direction, assuming that we already have a small representation of a gammoid and finding a representation that is one vertex smaller or determining that no such representation exists.

Since we are working in the dual picture we obtain a solution to the dual of \textsc{Single Element Transversal Contraction} where our strict gammoid is represented by a directed graph in the usual way.\\
\\
\textsc{Single Element Strict Gammoid Deletion}\\
\textsc{Instance}: A strict gammoid $M$ and an element $e \in E(M)$.\\
\textsc{Question}: Is $M\backslash e$ a strict gammoid?\\
\\
That is, we provide a proof for the following theorem which, through duality, also proves Theorem~\ref{theorem 1}.

\begin{theorem}
\label{theorem 2}
    There exists a polynomial time algorithm that solves \textsc{Single Element Strict Gammoid Deletion}. If this algorithm obtains a positive answer then it also provides the strict gammoid representation of $M\backslash e$.
\end{theorem}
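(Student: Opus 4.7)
The plan is to exploit duality and work entirely on the strict gammoid side, where independence and rank of any candidate matroid can be computed via max-flow. Given $(D,V,S)$ representing $M$ and an element $e \in V$, the deletion $M\backslash e$ is automatically a gammoid represented by $(D, V\setminus\{e\}, S)$; the real task is to decide whether it also admits a strict gammoid representation, that is, a digraph $D'$ on vertex set $V' = V\setminus\{e\}$ together with a sink set $S' \subseteq V'$ inducing the same matroid, and to produce such a pair when it exists. Theorem~\ref{theorem 1} then falls out by dualising the input bipartite graph to a strict gammoid, deleting $e$ on that side, and dualising back.

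First I would analyse the local structure of $D$ around $e$, splitting into the cases $e \in S$ versus $e \notin S$ and according to whether $e$ has in-neighbours, out-neighbours, or both. This suggests a canonical candidate $D_0'$ obtained from $D$ by removing $e$ and splicing in an arc $(u,v)$ for each pair of arcs $u\to e$, $e\to v$, together with a canonical sink set $S_0'$ derived from $S$ by transferring the sink role of $e$ (when $e \in S$) to appropriate in-neighbours. A direct path-routing argument would show that every vertex-disjoint path system in $(D', V', S_0')$ lifts to one in $(D,V,S)$ avoiding $e$; the non-trivial content is the converse, namely identifying when some candidate achieves equality with $M\backslash e$ rather than a strict weakening of it.

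Next I would define a small, explicitly described family of local modifications of $(D_0', S_0')$, adding or deleting arcs between in-neighbours and out-neighbours of $e$ and toggling membership in $S'$ of a bounded set of \emph{relevant} vertices, and prove that if $M\backslash e$ is a strict gammoid at all then a valid representation occurs within this family. The family is designed to have size polynomial in $|V|$, so we can enumerate it. For each candidate $(D',S')$, verifying that it represents $M\backslash e$ reduces to checking equality of rank on a polynomially large but sufficient collection of test sets, each check being a max-flow computation in the respective digraph.

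The main obstacle will be the structural rigidity lemma: ruling out exotic alternative representations that rewire the graph far from $e$. I expect to handle this by tracking how the cyclic flats of $M$ behave under deletion, combined with Mason's characterisation of strict gammoids in terms of their cyclic flats; since any strict gammoid representation of $M\backslash e$ must respect these invariants, only the local modifications near $e$ are actually free, pinning the representation down to the enumerated family. Once this lemma is in place, correctness and the polynomial bound on the algorithm follow from the enumeration, and the promised representation on a YES instance is exactly the successful candidate.
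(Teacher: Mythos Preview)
Your proposal has a genuine gap at its central step: the ``structural rigidity lemma'' asserting that if $M\backslash e$ is a strict gammoid then some valid representation lies in a polynomial-size family of local modifications of $(D_0',S_0')$ around~$e$. You do not prove this, and there is no reason to expect it to be true in the form stated. Strict gammoid representations are far from unique, and the canonical (maximal) representation of $M\backslash e$ is governed by its cyclic flats with positive $\gamma$ value, not by the local arc structure of $D$ near~$e$. After deleting $e$, the cyclic flats carrying positive $\gamma$ can shift in a global way: the paper shows they arise as images of iterated joins (up to two levels) of the original positive-$\gamma$ cyclic flats of $M^+$, and the forward-neighbourhood structure encoding such a join need not resemble a splice of arcs through~$e$. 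So ``only the local modifications near $e$ are actually free'' is precisely the hard claim, and you have not supplied an argument for it.

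There is a second, related gap: your verification step says equality of the candidate with $M\backslash e$ ``reduces to checking equality of rank on a polynomially large but sufficient collection of test sets,'' but you never identify this collection. Two matroids can agree on many sets yet differ; isolating a polynomial witness family is nontrivial. The paper handles both issues differently: it does not enumerate graph modifications at all. Instead it (i) computes, via the $\gamma/\eta$ machinery on a polynomially bounded family $\mathcal{E}$ of cyclic flats built from joins, whether any cyclic flat of $M\backslash e$ has negative $\gamma$; (ii) if not, reads off all cyclic flats of $M\backslash e$ with positive $\gamma$, builds the \emph{single} canonical strict gammoid $M'$ from them, and (iii) proves (Lemma~\ref{final lemma}) that $M=M'$ can be certified by comparing rank and closure of $Z_0\cup Z_1$ for all pairs $Z_0,Z_1$ of positive-$\gamma$ cyclic flats. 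The rigidity the paper exploits is thus lattice-theoretic (in $\mathcal{Z}(M)$), not graph-local, and this is what makes the polynomial bound go through.
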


\subsection{Further Applications}
One might wonder why a polynomial-time algorithm for a single-element contraction does not lead to a polynomial-time algorithm for a many-element contraction. As it turns out, it is possible that for a transversal matroid, $M$, there are distinct elements $e$ and $f$ such that neither $M/e$ nor $M/f$ is transversal, but $M/\{e,f\}$ is transversal. Note that, in this case, our algorithm can determine that $M/e$ and $M/f$ are not transversal, but then cannot be used to determine anything about $M/\{e,f\}$.

In general, even our most fundamental results do not apply when contracting more than one element. However, there is a specific case in which some of our results do apply. A fundamental basis of a matroid, $M$, is a basis, $B_F$, of $M$ such that for any cyclic flat $Z$ of $M$ we have that $B_F \cap Z$ spans $Z$. As we explain in Section 4, if a polynomial-time algorithm existed that could solve \textsc{Transversal Contraction} in polynomial time in the case when $X$ is a fundamental basis of $M$, then a polynomial time algorithm for \textsc{Transversal Dual} would also exist. Therefore, while we can currently only contract a single element in polynomial time, if our methods can be built upon, we may be able to provide the first real progress toward the questions in \cite{DominicProblems} and \cite{jobonotes} in over 50 years.

Also in Section 4 we explain that while many of our important results for a single element contraction do not carry over to the general case, some of them do transfer to the case when $X$ is a fundamental basis. This provides hope that perhaps our technique, or a similar technique can solve this problem and therefore provide a polynomial-time algorithm that solves \textsc{Transversal Dual}.

\subsection{Organisation}

The remainder of this paper is organised as follows. In Section 2 we introduce some basic concepts in matroid theory and the theory of transversal matroids. In Section 3 we develop the necessary theory and then prove Theorem~\ref{theorem 2} which of course also proves Theorem~\ref{theorem 1}. In Section 4 we then examine the potential of extending our ideas toward obtaining a polynomial-time algorithm to determine if a transversal matroid is co-transversal. Then in Section 5 we conclude the paper with some conjectures about extensions of our results.

\section{Preliminaries}

\subsection{Notation}
If $S$ is a set we let $S+x$ be $S \cup \{x\}$. We use $[n]$ to denote the set $\{0,1,2...,n\}$. For sets $S$ and $S'$ we use $S \subseteq S'$ to denote that $S$ is a subset of $S'$ and $S \subset S'$ to denote that $S$ is a proper subset of $S'$. If $M$ is a matroid and $X$ is a subset of $M$ we use $M|X$ to represent $M\backslash (M-X)$, that is, $M$ restricted to $X$.

\subsection{Transversals}
Let $E$ be a finite set. We define a {\em set system} $(E,\mathcal{A})$ to be $E$ along with a multiset $\mathcal{A}=(A_j: j \in [n])$ of subsets of $E$ where $n=|\mathcal{A}|$. Thus the same set may appear multiple times in $\mathcal{A}$, indexed by different elements of $[n]$.

A set system $(E,\mathcal{A})$ can be represented by a bipartite graph whose vertex set is $\mathcal{A} \cup E$; an edge connects $A_j \in \mathcal{A}$ with $e \in E$ if and only if $e \in A_j$. Similarly each bipartite graph can be seen as representing a set system.

A {\em transversal} of the set system $(E,\mathcal{A})$ is a subset $T$ of $E$ for which there is a bijection $\phi: E \rightarrow [n]$ with $e \in \phi(e)$ for all $e \in E$. We refer to the bijection, $\phi$, as a {\em matching}.

A well known theorem of Hall determines necessary and sufficient conditions for when a set system has a transversal \cite{hall1987representatives}.

\begin{theorem}
\label{hall}
    A finite set system $(E,(A_j: j \in [n]))$ has a transversal if and only if for all $K \subseteq [n]$,
    \[
    |\bigcup_{j \in K} A_j| \geq |K|.
    \]
\end{theorem}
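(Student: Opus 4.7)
The plan is to prove both directions, with necessity being immediate and sufficiency proved by induction on $n = |\mathcal{A}|$. For necessity, suppose a transversal $T$ exists with matching $\phi$. For any $K \subseteq [n]$, the elements $\phi^{-1}(K) \subseteq T$ form a set of $|K|$ distinct elements, each lying in some $A_j$ with $j \in K$, so $\bigcup_{j \in K} A_j \supseteq \phi^{-1}(K)$ has size at least $|K|$.

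For sufficiency, I would induct on $n$. The base case ($n=0$, one set) follows since the Hall condition applied to $K=\{0\}$ gives $|A_0| \geq 1$, so we may pick any element. For the inductive step, I would split into two cases according to how tight the Hall condition is on proper subfamilies.

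In the \emph{slack case}, every proper nonempty $K \subsetneq [n]$ satisfies $|\bigcup_{j \in K} A_j| \geq |K| + 1$. Here pick any $e \in A_n$, match $e$ to $n$, and apply the inductive hypothesis to the reduced system $(E - e, (A_j - e : j \in [n-1]))$; Hall's condition persists because removing one element from the union can decrease its size by at most one, and we had at least one unit of slack. In the \emph{tight case}, some proper nonempty $K \subsetneq [n]$ satisfies $|\bigcup_{j \in K} A_j| = |K|$; let $U$ be this union. Apply induction to $(U, (A_j : j \in K))$ to match $U$ with $K$, and apply induction to $(E - U, (A_j - U : j \in [n] - K))$ to match the rest. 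Combining the two matchings yields the required bijection.

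The main thing to verify carefully is that Hall's condition is inherited by the reduced system $(E - U, (A_j - U : j \in [n] - K))$ in the tight case. For $K' \subseteq [n] - K$, we have $\bigcup_{j \in K'} (A_j - U) = \bigl(\bigcup_{j \in K \cup K'} A_j\bigr) - U$, so its size equals $|\bigcup_{j \in K \cup K'} A_j| - |U| \geq |K \cup K'| - |K| = |K'|$ by the hypothesis applied to $K \cup K'$ and the tightness on $K$. This disjointification is the only subtle step; the rest is routine bookkeeping. I expect no serious obstacle beyond choosing the case split cleanly, since the induction machinery handles everything once the two cases are identified.
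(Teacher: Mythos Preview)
Your argument is correct and is the standard inductive proof of Hall's theorem. The paper does not supply its own proof of this statement; it simply states the result and cites Hall's original paper, so there is nothing to compare against. One minor bookkeeping point worth making explicit: in the slack case you need $A_n \neq \emptyset$ to pick $e$, which follows from the Hall condition applied to $K=\{n\}$; and in the tight case both subfamilies are strictly smaller than the original (since $K$ is proper and nonempty), so the induction applies on both pieces. Otherwise the proof is complete as written.
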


If $T$ is a transversal of set system $(E,\mathcal{A})$ and $T'$ is a proper subset of $T$ then we say $T'$ is a {\em partial transversal} of the system $(E,\mathcal{A})$.

\subsection{Lattices}
A lattice is a set, $L$, with a partial order such that any two elements, $e_0$ and $e_1$ have a unique least upper bound, called the {\em join} of $e_0$ and $e_1$, denoted $e_0 \lor e_1$, and a unique greatest lower bound, called the {\em meet} of $e_0$ and $e_1$, denoted $e_0 \land e_1$. We will only consider finite lattices. Such a lattice will have both a minimum element, $0_L$ and a maximum element $1_L$.

We say a set system is a {\em lattice under inclusion} if the partial ordering imposed on the sets in the system by containment gives rise to a lattice.

\subsection{Matroids}

We will use the terms; {\em independent sets}, {\em dependent sets}, {\em loops}, {\em coloops}, {\em bases}, {\em circuits}, {\em cyclic sets}, {\em rank}, {\em closure}, {\em flats}, {\em cyclic flats}, {\em span}, {\em contraction}, {\em deletion}, {\em minors}, and {\em duality} without providing definitions. We refer the reader to \cite{oxley} for details on matroids.

We have from \cite{bonin2008lattice} that it is possible to define a matroid from its set of cyclic flats and their ranks in the following way:

\begin{definition}
\label{cyclic flat axioms}
    Let $(E(M),\mathcal{Z})$ be a set system where all sets in $\mathcal{Z}$ are distinct and let $r$ be an integer-valued function on $\mathcal{Z}$. There is a matroid for which $\mathcal{Z}$ is the collection of cyclic flats and $r$ is the rank function restricted to the sets in $\mathcal{Z}$ if and only if
    \begin{enumerate}
        \item [Z1] $\mathcal{Z}$ is a lattice under inclusion.
        \item [Z2] $r(0_\mathcal{Z})=0$.
        \item [Z3] $0<r(Y)-r(X)<|Y-X|$ for all sets $X,Y \in \mathcal{Z}$ with $X \subset Y$.
        \item [Z4] For all sets $X,Y \in \mathcal{Z}$,
        \[
        r(X)+r(Y) \geq r(X\lor Y) + r(X \land Y) + |(X \cap Y) - (X \land Y)|.
        \]
    \end{enumerate}
\end{definition}

We let $\mathcal{Z}(M)$ be the collection of all cyclic flats of a matroid $M$. We will also define the {\em nullity} function, $n:2^{E(M)} \rightarrow \mathbb{N}$ as $n(X)=|X|-r(X)$. Therefore we see that we can equivalently define a matroid using its set of cyclic flats and their nullities. We also have from \cite{oxley}*{Chapter 2.1 Exercise 13a} that a set is a cyclic flat of a matroid, $M$, if and only if its complement is a cyclic flat of, $M^*$, the dual of $M$. We will also make use of the following equation for the dual rank function \cite{oxley}:
\begin{equation}
\label{rank dual}
r_{M^*}(X) = r_M(\overline{X})+|X|-r_M(M).
\end{equation}

Now let $(E(M), \mathcal{A})$ be a set system and let $\mathcal{I}$ be the set of partial transversals on $(E(M), \mathcal{A})$. Then $(E(M),\mathcal{I})$ is a matroid \cite{jobonotes}. We call any matroid arising in this way a {\em transversal matroid}. We define an important function called the $\beta$ function recursively on all subsets of a matroid
\begin{equation}
\label{betaeq}
\beta(X) = r(M) - r(X) - \sum_{Z \in \mathcal{Z}(M): X \subset Z} \beta(Z).
\end{equation}

Though they did not formulate their theorem in this way we have the following theorem from \cite{ingletoninequality} and \cite{masoninequality} (the proof that this is equivalent to the formulations of \cite{ingletoninequality} and \cite{masoninequality} can be found in \cite{jobonotes}).

\begin{theorem}
\label{beta}
	 A matroid, $M$, is transversal if and only if $\beta(X)\geq 0$ for all $X \subseteq E(M)$.
\end{theorem}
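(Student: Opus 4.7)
The plan is to establish both directions of Theorem~\ref{beta} by relating $\beta$-values to the multiplicities with which cyclic flats appear in a canonical presentation of a transversal matroid. A helpful preliminary reduction is that for $X \notin \mathcal{Z}(M)$, unwinding~\eqref{betaeq} and substituting the identity obtained by rearranging the definition of $\beta$ on $Z_X$ (the smallest cyclic flat containing $X$) yields $\beta(X) = r(Z_X) - r(X) \geq 0$ automatically, with no reference to transversality. So the substance of the theorem is the cyclic flat case: showing $\beta(Z) \geq 0$ for every $Z \in \mathcal{Z}(M)$ if and only if $M$ is transversal.

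For the forward direction, assume $M$ is transversal. I would invoke the classical structural result that $M$ admits a presentation $(A_i : i \in [n])$ with $n = r(M)$ in which each $A_i$ is either a cyclic flat of $M$ or equal to $E(M)$. Let $\alpha(Z)$ count the indices $i$ with $A_i = Z$. The core identity to prove is
\[
r(M) - r(Z) = \sum_{Z' \in \mathcal{Z}(M),\, Z' \supseteq Z} \alpha(Z')
\]
for every cyclic flat $Z$, which follows from a Hall-type counting argument on how many presentation sets fail to be matched into $Z$ under a maximum matching of the presentation into $E(M)$, using the cyclic flat structure of the $A_i$. Comparing with~\eqref{betaeq} and inducting downward in the cyclic flat lattice (guaranteed by Z1) then gives $\beta(Z) = \alpha(Z) \geq 0$.

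For the backward direction, assume $\beta(X) \geq 0$ for all $X$. I would construct a candidate presentation by taking each cyclic flat $Z$ with multiplicity $\beta(Z)$, padded with copies of $E(M)$ to a total of $r(M)$ sets, and let $M_T$ be the resulting transversal matroid. The verification that $M_T = M$ reduces, via Hall's theorem, to exhibiting for every $X \subseteq E(M)$ a matching of exactly $r_M(X)$ presentation sets into $X$ and ruling out larger matchings; here $\beta \geq 0$ supplies enough sets to saturate $r_M(X)$, and the submodular inequality Z4 from Definition~\ref{cyclic flat axioms} prevents over-saturation. The main obstacle will be this backward direction's matching argument: the forward direction is essentially bookkeeping once the minimal-presentation theorem is invoked, but the converse requires constructing the matching from scratch and proving its maximality directly from the axioms Z1--Z4, and this is where the non-negativity hypothesis genuinely earns its keep.
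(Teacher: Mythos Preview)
The paper does not prove Theorem~\ref{beta} at all: it is quoted as a known result of Ingleton and Mason, with the reformulation in terms of the $\beta$ function attributed to Bonin's lecture notes. So there is no in-paper argument to compare your sketch against; you are supplying a proof where the paper supplies only a citation.

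That said, your preliminary reduction contains a genuine error. You assert that for $X\notin\mathcal{Z}(M)$ there is a well-defined \emph{smallest} cyclic flat $Z_X$ containing $X$, and that the cyclic flats strictly containing $X$ are exactly those containing $Z_X$. This fails in general: the lattice meet $Z_1\wedge Z_2$ of two cyclic flats is the maximal cyclic set inside $Z_1\cap Z_2$, not $Z_1\cap Z_2$ itself, so an element of $Z_1\cap Z_2$ that is a coloop of $M|(Z_1\cap Z_2)$ lies in both $Z_1$ and $Z_2$ but not in $Z_1\wedge Z_2$. Concretely, take the rank-$3$ matroid on $\{a,b,c,d,e\}$ whose circuits are $\{a,b,c\}$, $\{a,d,e\}$, $\{b,c,d,e\}$; then $\{a,b,c\}$ and $\{a,d,e\}$ are incomparable cyclic flats both containing $a$, yet their meet is $\emptyset$. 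Your formula $\beta(X)=r(Z_X)-r(X)$ is therefore not available, and the claim that $\beta(X)\ge 0$ is ``automatic'' for non-cyclic-flats needs a different argument. (The dual statement you may be thinking of---that every set has a unique \emph{largest} cyclic flat contained in it---is true and is Proposition~\ref{max flat prop} in the paper, but the order-dual assertion you need is not.)

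Your forward and backward strategies are the standard ones and are broadly sound as outlines, but the backward direction will in fact require you to check $\beta\ge 0$ on \emph{all} subsets, not just cyclic flats, precisely because the reduction you proposed does not go through; the non-negativity on arbitrary $X$ is what feeds the Hall-condition verification for the constructed presentation.
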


Next let $D$ be a directed graph with $E$ and $S$ each being subsets of $V(D)$, we do not require that $E$ and $S$ are disjoint. We say that a set $I \subseteq E$ is {\em linked into $S$} if there exist $|I|$ disjoint directed paths each starting at a vertex in $I$ and ending at a vertex in $S$. The set system $(E,\mathcal{I})$ where $\mathcal{I}$ is the collection of sets with linkings into $S$ is a matroid \cite{oxley}. We denote this particular matroid as $M(D,E,S)$ and we call any matroid arising in this way a {\em gammoid}. If $M$ is a gammoid such that $M=M(D,V(D),S)$ for some directed graph $D$ then we say that $M$ is a {\em strict gammoid}.

It is not difficult to see that gammoids are the restrictions of strict gammoids. Strict gammoids, and as an extension, gammoids, are of interest to us due to the following theorem \cite{INGLETON197351}.

\begin{theorem}
\label{cotrans}
    A matroid is co-transversal if and only if it is a strict gammoid.
\end{theorem}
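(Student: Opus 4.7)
The plan is to establish both directions by exhibiting an explicit translation between digraph data and bipartite-graph data that exchanges a strict gammoid representation with a transversal representation of its dual. The forward direction (strict gammoid implies co-transversal) is the substantive one; the reverse direction then follows by inverting the construction.

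For the forward direction, suppose $M$ is a strict gammoid on $V$ presented by digraph $D$ with sink set $S$. I would introduce the set system $\mathcal{A} = (A_v : v \in V \setminus S)$ on ground set $V$, where $A_v = \{v\} \cup N_D^+(v)$ and $N_D^+(v)$ denotes the out-neighborhood of $v$ in $D$, and show that the transversal matroid of $(V, \mathcal{A})$ equals $M^*$. By \eqref{rank dual}, a set $I \subseteq V$ is independent in $M^*$ iff $V \setminus I$ contains a basis of $M$, equivalently iff there exists a family of $r(M)$ vertex-disjoint paths in $D$ from some $B \subseteq V \setminus I$ into $S$. From such a linking I would construct a transversal of $\mathcal{A}$ with image $V \setminus B$: for each $v \in V \setminus S$ not on any path, match $A_v$ to $v$ itself; for each $v \in V \setminus S$ lying on a path, match $A_v$ to its successor along that path. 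Since $I \subseteq V \setminus B$, $I$ is a partial transversal, and conversely a partial transversal containing $I$ can be threaded into vertex-disjoint directed paths terminating in $S$, witnessing independence in $M^*$.

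For the reverse direction, given a transversal matroid $M$ with bipartite representation $G = (V_0, V_1, E)$ and a maximum matching $F$, I would invert this construction. Define a digraph $D$ on $V_0$ with an arc $u \to v$ whenever some $A_j \in V_1$ is $F$-matched to $u$ and $G$-adjacent to $v$, and take the sink set to be the $V_0$-vertices uncovered by $F$. The same linking/transversal correspondence then shows that the resulting strict gammoid equals $M^*$.

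The main obstacle is rigorously establishing the linking/transversal correspondence. The bookkeeping around which vertices are endpoints, interior, or unused by a given linking, and verifying that vertex-disjointness of paths precisely matches injectivity of the set-to-element matching, requires care. A potentially cleaner alternative for the forward direction is to verify $\gamma(X) \geq 0$ for all $X \subseteq V$ directly via Corollary~\ref{gamma}, using a Menger-type argument to express both $n(X)$ and the summands $\gamma(Z)$ as differences of path-system counts in $D$; the converse inclusion would still require an explicit inverse construction of the kind sketched above.
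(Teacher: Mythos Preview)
Your proposal is correct and, for the forward direction, matches the paper: both use the closed-out-neighbourhood set system $(N_D^+[v])_{v\notin S}$ and the linking/transversal correspondence (the paper packages this as Proposition~\ref{basis proposition}, cited from \cite{combigeo}, without spelling out the path-threading argument you sketch). The reverse direction, however, is handled quite differently. You invert the construction in the classical Ingleton--Piff way: pick any transversal presentation, fix a maximum matching, and read off arcs of a digraph whose strict gammoid is the dual. The paper instead builds the digraph intrinsically from the matroid, with no reference to a given bipartite presentation: it takes the multiset of cyclic flats with positive $\gamma$ value (each with multiplicity $\gamma(Z)$), shows via Hall's theorem that this multiset admits a system of distinct representatives from $E(M)$ (Proposition~\ref{phi0 matching}), uses those representatives as the non-sink vertices with out-neighbourhoods equal to the corresponding cyclic flats, and then verifies directly that the resulting strict gammoid has the right dependent and independent sets (Lemma~\ref{gammoid=cotrans}). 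Your route is shorter and more elementary; the paper's detour through $\gamma$ and cyclic flats is deliberate, because it produces the \emph{maximal} representation in which closed forward neighbourhoods are exactly the cyclic flats with positive $\gamma$ value (Theorem~\ref{theorem 23}), and that specific representation is what drives the polynomial-time algorithm in Section~4. One small technicality in your reverse construction: you should note that the presentation can be taken with $|V_1|=r(M)$, so that a maximum matching saturates $V_1$ and the sink set has the right size.
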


\subsection{Complexity Theory}
Let $S$ be a set of characters. We call a sequence of characters from $S$ a {\em word}. We call a set of words, $L$, a {\em language}. Complexity theory examines the {\em complexity} of algorithms that determine if a given word belongs to a given language. For our purposes we can imagine our words as inputs to a Turing Machine coded with an algorithm that will eventually accept the word if it belongs to the language and reject it otherwise. If we can bound the number of steps the Turing Machine will take by some polynomial function, $n^k$, where $n$ is the size of the word and $k$ is some constant then we say that the algorithm runs in {\em polynomial time}.

We classify languages based on their complexity and thus build a structure known as the {\em polynomial hierarchy}. The first level of the polynomial hierarchy is $P$. A language, $L$, is in $P$ if there exists an algorithm which will determine that a word belongs to $L$ in polynomial time.

The next level of the polynomial hierarchy is best viewed through an example. Let $L_S$ be the language of satisfiable boolean formulae. We let $L_S'$ be the language consisting of words of the form $(F,A)$ where $F$ is a boolean formula and $A$ is an assignment of the variables of $F$ such that $F$ evaluates to true. Clearly a word, $F$, belongs to $L_S$ if and only if there exists a word, $(F,A)$, belonging to $L_S'$. A polynomial-time algorithm for determining whether a word belongs to $L_S$ does not exist as far as we know, however, a polynomial-time algorithm for determining whether a word belongs to $L_S'$ does exist, one simply needs to evaluate $F$ with $A$ to ensure that $F$ evaluates to true.

This example gives us our definition of the next set of the polynomial hierarchy we are examining, $\Sigma_1$, also known as NP. A language, $L$, of words, $W$, is in $\Sigma_1$ if and only if there exists some language, $L' \in P$, of pairs, $(W,V)$, where a word, $W$ is in $L$ if and only if there exists some $V$ such that $(W,V) \in L'$. Therefore, since $L_S'$ is in $P$ we see that $L_S$ is in $\Sigma_1$.

Of course ``there exists'' is not the only quantifier we can consider. Our next example will be the language of tautological formulae, $L_T$, that is, those logical formulae for which every assignment of boolean variables results in the formula evaluating to true. Here we let $L_T'$ be the language consisting of words of the form $(F,A)$ where $F$ is a logical formula and $A$ can be anything except an assignment of variables to $F$ such that $F$ evaluates to false. Clearly a word, $F$, belongs to $L_T$ if and only if ``for all'' $A$ we have that $(F,A) \in L_T'$. Once again we have that $L_T'$ is in $P$ but there exists no known polynomial-time algorithm to verify that a formula belongs to $L_T$.

This example gives us our definition of the next set that we are examining, $\Pi_1$ (or co-NP), which can be thought of as being at the same level of the hierarchy as $\Sigma_1$ but distinct from it. A language, $L$, of words, $W$, is in $\Pi_1$ if and only if there exists some language, $L' \in P$, of pairs $(W,V)$, where a word, $W$ is in $L$ if and only if for all $V$ we have that $(W,V) \in L'$. Therefore, since $L_T'$ is in $P$ we see that $L_T$ is in $\Pi_1$.

We can construct further levels of the polynomial hierarchy by alternating quantifiers. For our purposes we need only construct one more set, $\Sigma_2$. A language, $L$, of words, $W$, is in $\Sigma_2$ if and only if there exists some language, $L' \in \Pi_1$, of pairs, $(W,V)$, where a word $W$ belongs to $L$ if and only if there exists a $V$ such that $(W,V) \in L'$.

We will now show that both \textsc{Transversal Contraction} and \textsc{Transversal Dual} are in $\Sigma_2$. To see this we first have the fact that if given the associated bipartite graph for a set system $(E,\mathcal{A})$, we may determine whether a subset of $E$ is a transversal or not in polynomial time \cite{oxley}.

We first examine \textsc{Transversal Contraction}. Let $M_T$ be a transversal matroid on the same ground set as $M/X$ for which we have the transversal representation. Let $L_T$ be the language of triples of the form $(M/X, M_T, B)$ where $B$ can be anything except a set which is a basis in one of $M/X$ or $M_T$ but not the other. Since we can check in polynomial time if $B$ is a basis in $M/X$ or $M_T$ we have that $L_T \in P$.

Now let $L_X$ be the language of pairs of the form $(M/X,M_T)$ where $M/X \cong M_T$. We have that a word, $(M/X, M_T)$ is in $L_X$ if and only if for all $B$ we have that $(M/X,M_T,B) \in L_T$. Therefore $L_X$ is in $\Pi_1$.

Finally we have the language, $L$, with words of the form $M/X$ where $M/X$ is a transversal matroid. We see that determining whether a word is in $L$ is equivalent to solving \textsc{Transversal Contraction}. A word, $M/X$, is in $L$ if and only if there exists some transversal matroid $M_T$ such that $M/X \cong M_T$, that is if $(M/X, M_T) \in L_X$. Since $L_X \in \Pi_1$ this means that $L \in \Sigma_2$.

For \textsc{Transversal Dual} the argument is the same except that instead of $M/X$ we use $M^*$. That is, for a given transversal matroid $M_T$ with representation, the language of triples of the form $(M^*,M_T, B)$ where $B$ is not a basis of only one of $M^*$ or $M_T$, is in $P$. Therefore, the language of pairs of the form $(M^*, M_T)$ where $M^* \cong M_T$, is in $\Pi_1$. Therefore, the language of words, $M^*$, for which an $M_T$ exists such that $M^* \cong M_T$, is in $\Sigma_2$.

This means that determining if a dual or minor of a transversal matroid is also transversal is in $\Sigma_2$, even if we are only contracting a single element. One might suspect that $L$ should be at least in $\Pi_1$ given Equation~\ref{betaeq}, as all one has to do to verify that $M$ is not a transversal matroid is to find a set, $X$, for which $\beta(X)<0$, however, even if such an $X$ is found there could be exponentially many cyclic flats that strictly contain $X$ and so verifying the $\beta$ value of $X$ in polynomial time is likely to require some other technique, leaving the problem in $\Sigma_2$.
\section{Results}

\subsection{Co-Transversal Matroids}

We begin this section by calculating the dual of \eqref{betaeq}. That is, we wish to obtain a function which will be non-zero on all sets of a matroid if and only if that matroid is co-transversal.

At first, we have
\[\beta^*(Y) = r^*(M) - r^*(Y) - \sum_{Z \in \mathcal{Z}(M^*): Y \subset Z} \beta^*(Z).\]

We can use \eqref{rank dual} to get
\[\beta^*(Y) = |E(M)|-r(M) - r(\overline{Y})-|Y|+r(M) - \sum_{Z \in \mathcal{Z}(M^*): Y \subset Z} \beta^*(Z).\]

We may cancel the $r(M)$ terms and combine the cardinality terms
\[\beta^*(Y) = |\overline{Y}| - r(\overline{Y}) - \sum_{Z \in \mathcal{Z}(M^*): Y \subset Z} \beta^*(Z).\]

We wish to replace $Y$ with $\overline{Y}$ in the range of the summation. To do this note that if $Y \subset Z$ then $\overline{Z} \subset \overline{Y}$, this gives us
\[\beta^*(Y) = |\overline{Y}| - r(\overline{Y}) - \sum_{Z \in \mathcal{Z}(M^*): \overline{Z} \subset \overline{Y}} \beta^*(Z).\]

The first two terms in the sum are of course the nullity of $\overline{Y}$ which is defined as $n(\overline{Y}) = |\overline{Y}|-r(\overline{Y})$, while in the summation term it would help if we just examined $\overline{Z}$ directly. This is fortunately, quite easy, as the cyclic flats of $M$ are exactly the complements of cyclic flats of $M^*$, hence we have
$$\beta^*(Y) = n(\overline{Y}) - \sum_{Z \in \mathcal{Z}(M): Z \subset \overline{Y}} \beta^*(\overline{Z}).$$

We now relable $\overline{Y}$ to be $X$ and then we have
\[\beta^*(\overline{X}) = n(X) - \sum_{Z \in \mathcal{Z}(M): Z \subset X} \beta^*(\overline{Z}).\]

Now we define a new function $\gamma(X)$ which is simply $\beta^*(\overline{X})$, we can then replace both of our $\beta^*$ terms as follows
\begin{equation}
\label{og gamma}
\gamma(X) = n(X) - \sum_{Z \in \mathcal{Z}(M): Z \subset X} \gamma(Z)
\end{equation}
and so we obtain our $\gamma$ function. Since the $\gamma$ function checks the same sets as the $\beta^*$ function we obtain the following Corollary.

\begin{corollary}
\label{gamma}
	A matroid, $M$, is co-transversal if and only if $\gamma(X)\geq 0$ for all $X \subseteq E(M)$.
\end{corollary}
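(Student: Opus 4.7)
The plan is to combine Theorem~\ref{beta} applied to the dual matroid $M^*$ with the identity $\gamma(X) = \beta^*(\overline{X})$ established in the derivation preceding the statement. By definition, $M$ is co-transversal precisely when $M^*$ is transversal. Applying Theorem~\ref{beta} to $M^*$, this happens if and only if $\beta^*(Y) \ge 0$ for every $Y \subseteq E(M^*) = E(M)$, where $\beta^*$ is just the $\beta$ function of the matroid $M^*$, given by the recursion written at the start of this section.

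Next I would observe that the chain of equalities culminating in~\eqref{og gamma} shows precisely that the function $\gamma$ defined by that recursion satisfies $\gamma(X) = \beta^*(\overline{X})$ for every $X \subseteq E(M)$. Because complementation is a bijection on $2^{E(M)}$, the condition that $\gamma(X)\ge 0$ for all $X \subseteq E(M)$ is equivalent to the condition that $\beta^*(Y)\ge 0$ for all $Y \subseteq E(M)$. Chaining these two equivalences yields the corollary.

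There is no substantive obstacle here; the content has been front-loaded into the derivation of $\gamma$, and the statement is essentially a formal rewrite of Theorem~\ref{beta} after complementing every subset. The only point I would verify carefully is that the recursion~\eqref{og gamma} unambiguously determines $\gamma$, so that it genuinely coincides with the map $X \mapsto \beta^*(\overline{X})$. This follows from the fact that $\mathcal{Z}(M)$ is finite and partially ordered by inclusion, so $\gamma(X)$ can be computed by induction on the cyclic flats of $M$ properly contained in $X$, and this induction mirrors exactly the one defining $\beta^*(\overline{X})$ from the cyclic flats of $M^*$ properly containing $\overline{X}$, via the bijection $Z \mapsto \overline{Z}$ between $\mathcal{Z}(M)$ and $\mathcal{Z}(M^*)$ that was invoked in the derivation above.
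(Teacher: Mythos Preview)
Your proposal is correct and follows exactly the approach the paper takes: the paper simply states that ``the $\gamma$ function checks the same sets as the $\beta^*$ function'' and invokes Theorem~\ref{beta} for $M^*$, which is precisely the argument you have spelled out (with a bit more care about well-definedness of the recursion). There is nothing to add.
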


Combining this result with Theorem~\ref{cotrans} implies the following:

\begin{corollary}
    A matroid, $M$, is a strict gammoid if and only if $\gamma(X) \geq 0$ for all $X \subseteq E(M)$.
\end{corollary}

The proof of Theorem~\ref{beta}, which can be found in \cite{jobonotes}, takes the cyclic flats with nonzero $\beta$ values and constructs a transversal representation. A similar construction can be obtained in the dual case for strict gammoids.

For this construction we define a few terms. If $M=M(D,V(D),S)$ is a strict gammoid we call $(D,V(D),S)$ a {\em representation} of $M$. We say that a representation is {\em maximal} if $(D+a,V(D),S)$ is not a representation of $M$ for any arc $a \notin A(D)$. If $v$ is a vertex of $V(D)-S$ then the set of vertices, $u$ for which the arc $(v,u) \in A(D)$ is called the {\em forward neighbourhood} of $v$, denoted $N_D^+(v)$. The {\em closed forward neighbourhood} of $v$, denoted $N_D^+[v]$, is $N_D^+(v)+v$. If $X$ is a set of vertices of $D$ then the set $N(X)$ is the set of vertices, $v$ such that $N^+_D[v]=X$.

\begin{theorem}
\label{without proof}
    Let $D_M$ be a maximal representation of a strict gammoid, $M$. Let $X$ be a subset of $E(M)$. Then $X$ is a closed forward neighbourhood of $D_M$ if and only if $X$ is a cyclic flat of $M$ with nonzero $\gamma$ value and $|N(X)|=\gamma(X)$.
\end{theorem}

Our version of the proof of Theorem~\ref{without proof} is quite detailed and involves many results about strict gammoids that are not used in this paper. For this reason the proof is ommitted and will be available in the author's thesis. We justify this with the fact that the dual result exists for transversal matroids as can be found in \cite{jobonotes} and the fact that we will not use Theorem~\ref{without proof} directly.

What this Theorem tells us is that we can simply read off the $\gamma$ values of cyclic flats with nonzero $\gamma$ values from the maximal directed graph representation. Similarly if we have these particular $\gamma$ values we can construct the maximal directed graph representation. This does not directly follow from the result, however, this is something that is also proved in the author's thesis. The full result is as follows:

\begin{lemma}
\label{construct gammoid}
    Let $M$ be any matroid and let $\mathcal{F}(M)$ be the collection of cyclic flats of $M$ with positive $\gamma$ values. There is a polynomial-time algorithm that either constructs a maximal representation of a strict gammoid, $M'$, with $\mathcal{F}(M')=\mathcal{F}(M)$ and $\gamma_{M'}(F)=\gamma_M(F)$ for all $F \in \mathcal{F}(M')$ or verifies that no such strict gammoid exists. If $M$ is a strict gammoid then $M=M'$.
\end{lemma}

Note that if $M$ is not a strict gammoid, $M'$ may still exist, in which case $M$ is not equal to $M'$. This will cause difficulties for our algorithm as we will construct such an $M'$ and then have to verify that $M=M'$, however, we are able to do so in polynomial time.

Since any representation obtained from Lemma~\ref{construct gammoid} has $|E(M)|$ vertices there are at most $|E(M)|$ closed forward neighbourhoods and so viewing a strict gammoid as a directed graph is equivalent (for the purposes of polynomial-time algorithms) to viewing the set of cyclic flats with nonzero $\gamma$ values and their $\gamma$ values. Therefore we use Theorem~\ref{without proof} to justify viewing strict gammoids by their collections of cyclic flats with nonzero $\gamma$ values rather than as directed graph representations.

To do this in general we need one more result which allows us to obtain a maximal representation of a strict gammoid from a general representation of a strict gammoid. Once again, the dual of this result exists for transversal matroids and can be found in \cite{jobonotes}. Therefore we will not provide a proof here and instead refer the reader to the author's thesis for details.

\begin{lemma}
\label{poly time maximality}
    Let $(D,V(D),S)$ be a representation of a strict gammoid $M$. There exists an algorithm which obtains a maximal representation for $M$ in $\mathcal{O}(|V(D)|^2)$ operations.
\end{lemma}

Finally we also explicitly state the result we proved and used above about the bound on the number of cyclic flats of a strict gammoid with nonzero $\gamma$ values.

\begin{corollary}
\label{bound corollary}
    Let $M$ be a strict gammoid. Then
    \[
    \sum_{Z \in \mathcal{Z}(M): Z \subset E(M)} \gamma(Z) \leq |E(M)|.
    \]
\end{corollary}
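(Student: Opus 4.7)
The plan is to derive the inequality directly from the recursive definition of $\gamma$ applied at the top element $X = E(M)$, using the characterisation of co-transversal matroids via $\gamma$. Since the $\gamma$ recursion in \eqref{og gamma} is defined on arbitrary subsets, nothing stops us from setting $X = E(M)$, and the resulting sum on the right-hand side is exactly the sum appearing in the statement.

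Concretely, first I would apply \eqref{og gamma} with $X = E(M)$ to obtain
\[
\gamma(E(M)) = n(E(M)) - \sum_{Z \in \mathcal{Z}(M): Z \subset E(M)} \gamma(Z),
\]
and rearrange this to
\[
\sum_{Z \in \mathcal{Z}(M): Z \subset E(M)} \gamma(Z) = n(E(M)) - \gamma(E(M)).
\]
Next I would invoke Theorem~\ref{cotrans}, which tells us that the strict gammoid $M$ is co-transversal, and then Corollary~\ref{gamma}, which guarantees $\gamma(Y) \geq 0$ for every $Y \subseteq E(M)$. In particular, $\gamma(E(M)) \geq 0$, so the displayed identity yields the desired bound $\sum_{Z \subset E(M)} \gamma(Z) \leq n(E(M))$.

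There is essentially no obstacle here: the only subtlety worth flagging is to make sure the sum in \eqref{og gamma} really does range over \emph{strict} subsets (which it does, matching the corollary's range exactly), and to note that the recursion does not require $E(M)$ itself to be a cyclic flat. Both points are immediate from the definition of $\gamma$ derived earlier in the section.
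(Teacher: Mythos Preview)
Your proposal is correct and is essentially identical to the paper's own proof: the paper also applies \eqref{og gamma} at $X=E(M)$, invokes Theorem~\ref{cotrans} and Corollary~\ref{gamma} to get $\gamma(E(M))\geq 0$, and rearranges.
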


\subsection{Flats}

We require some results concerning flats of matroids. Some of these results pertain to general matroids rather than simply strict gammoids.

\begin{proposition}
\label{max flat prop}
    Let $X$ be a subset of a matroid, $M$ and let $L^*$ be the set of coloops of $M|X$. Then $X-L^*$ is the unique maximal cyclic set contained in $X$ and $n(X)=n(X-L^*)$.

    Furthermore if $X$ is a flat then $X-L^*$ is a cyclic flat.
\end{proposition}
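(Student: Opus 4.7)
My plan is to prove the three assertions (cyclicity and maximality, preservation of nullity, and the flat conclusion) in turn, using only standard matroid facts about coloops.

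First, for cyclicity of $X-L^*$, I will use the characterisation: a set $Y$ is cyclic in $M$ exactly when $M|Y$ has no coloops. The key sublemma is that deleting a coloop from a matroid does not create new coloops. Indeed, if $e$ is a coloop of $N$, then every basis of $N$ contains $e$, so the bases of $N\setminus e$ are exactly $\{B-e : B \text{ a basis of } N\}$; an element $f\neq e$ is a coloop of $N\setminus e$ iff $f$ belongs to every such $B-e$ iff $f$ is already a coloop of $N$. Iterating this observation across the finitely many elements of $L^*$, the matroid $(M|X)\setminus L^* = M|(X-L^*)$ has no coloops, so $X-L^*$ is cyclic.

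For maximality, suppose $Y\subseteq X$ is cyclic in $M$. Every element of $Y$ lies in some circuit of $M$ contained in $Y$, hence in a circuit of $M|X$. But a coloop of $M|X$ lies in no circuit of $M|X$, so $Y\cap L^* = \emptyset$, that is, $Y\subseteq X-L^*$. Combined with the previous paragraph, $X-L^*$ is the unique maximal cyclic subset of $X$.

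Next, the nullity claim is a direct calculation: deleting $|L^*|$ coloops from $M|X$ drops both the rank and the cardinality by exactly $|L^*|$, so
\[
n(X-L^*) = |X-L^*| - r(X-L^*) = (|X| - |L^*|) - (r(X)-|L^*|) = n(X).
\]

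Finally, I will show that if $X$ is a flat then so is $X-L^*$ (which, together with cyclicity, gives that $X-L^*$ is a cyclic flat). It suffices to check that no $e\notin X-L^*$ lies in $\mathrm{cl}(X-L^*)$. If $e\notin X$, then $e\notin \mathrm{cl}(X)\supseteq \mathrm{cl}(X-L^*)$ since $X$ is a flat. If $e\in L^*$, then $e$ is a coloop of $M|X$, so $r_M(X-e)=r_M(X)-1$, which means $e\notin \mathrm{cl}_M(X-e)$, and since $X-L^*\subseteq X-e$ we conclude $e\notin \mathrm{cl}_M(X-L^*)$.

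The substantive step is really only the first one, where one must argue that iteratively removing coloops never creates new ones; the remaining parts are bookkeeping with the rank function. I don't anticipate any serious obstacle here.
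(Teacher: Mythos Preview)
Your proof is correct and follows essentially the same route as the paper's. The paper's argument is terser: it asserts without justification that $M|(X-L^*)$ has no coloops, whereas you supply the sublemma that deleting a coloop neither creates nor destroys coloops among the remaining elements. Otherwise the nullity computation, the maximality argument, and the closure argument for the flat case all match the paper's reasoning.
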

\begin{proof}
    Removing $L^*$ from $X$ decreases the size and the rank of $X$ by the same amount so we see that $n(X)=n(X-L^*)$. If $X$ is closed $cl(X-L^*) \subseteq X$ and since adding any elements from $L^*$ would increase the rank of $X-L^*$ we see that $cl(X-L^*)=X-L^*$ and finally since $M|(X-L^*)$ contains no coloops $X-L^*$ is a cyclic flat.

    Also any other maximal cyclic set $X'$ could not contain any elements in $L^*$ since they are coloops of $M|X$ and therefore would be contained in $X-L^*$, hence $X-L^*=X'$ and therefore $X-L^*$ is unique.
\end{proof}

We can use Proposition~\ref{max flat prop} to see that in a strict gammoid, $M$, we have

\begin{equation}
\label{bound}
n(M) = \sum_{Z \in \mathcal{Z}(M) } \gamma(Z).
\end{equation}

Which we can see by letting $Z_\omega$ be the maximum cyclic flat contained in $E(M)$ described by Proposition~\ref{max flat prop}, then
\[\gamma(Z_\omega) = n(Z_\omega) - \sum_{Z \in \mathcal{Z}(M)-Z_\omega} \gamma(Z)\]
but by Proposition~\ref{max flat prop} we see that $n(Z_\omega) = n(M)$ and so rearranging gives the result. This gives another proof of Corollary~\ref{bound corollary}. Next we have two results about the $\gamma$ function that we will use later.

\begin{lemma}
\label{find cyclic}
    Let $X$ be a non-cyclic set in any matroid and let $L^*$ be the set of coloops of $M|X$. If $X-L^*$ is not a cyclic flat then $\gamma(X) = \gamma(X-L^*)$, and if $X-L^*$ is a cyclic flat then $\gamma(X)=0$.
\end{lemma}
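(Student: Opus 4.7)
The plan is to compute $\gamma(X)$ directly from its recursive definition \eqref{og gamma}, using Proposition~\ref{max flat prop} to handle the nullity term and a simple observation about cyclic sets to handle the sum.

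First I would establish the key structural fact: every cyclic flat $Z \in \mathcal{Z}(M)$ with $Z \subset X$ satisfies $Z \subseteq X - L^*$. Indeed, a cyclic set is a union of circuits, so every element of such a $Z$ lies in a circuit of $M$ contained in $Z \subseteq X$; hence no element of $Z$ can be a coloop of $M|X$. Since $X$ is assumed non-cyclic, $L^* \neq \emptyset$ and so $X - L^* \subsetneq X$; the converse containment $Z \subseteq X - L^* \Rightarrow Z \subset X$ is immediate. This gives the equality of indexing sets
\[
\{Z \in \mathcal{Z}(M) : Z \subset X\} = \{Z \in \mathcal{Z}(M) : Z \subseteq X - L^*\}.
\]

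Next I would combine this with Proposition~\ref{max flat prop}, which gives $n(X) = n(X - L^*)$. Substituting into \eqref{og gamma} yields
\[
\gamma(X) = n(X - L^*) - \sum_{Z \in \mathcal{Z}(M) : Z \subseteq X - L^*} \gamma(Z).
\]
Now I split into the two cases. If $X - L^*$ is not a cyclic flat, then the indexing set on the right equals $\{Z \in \mathcal{Z}(M) : Z \subset X - L^*\}$, and the right-hand side is exactly $\gamma(X - L^*)$ by \eqref{og gamma}. If $X - L^*$ is a cyclic flat, then it appears as one of the sets in the sum, which I can separate out to obtain
\[
\gamma(X) = n(X - L^*) - \gamma(X - L^*) - \sum_{Z \in \mathcal{Z}(M) : Z \subset X - L^*} \gamma(Z) = \gamma(X - L^*) - \gamma(X - L^*) = 0,
\]
again using \eqref{og gamma} applied to $X - L^*$.

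There is no real obstacle here; the only thing to be careful about is that the hypothesis that $X$ is non-cyclic is exactly what guarantees $L^* \neq \emptyset$, so that the containment $Z \subseteq X - L^*$ really does imply the strict containment $Z \subset X$ needed to translate between the two indexing conventions.
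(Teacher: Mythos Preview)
Your proof is correct and follows essentially the same approach as the paper: both use Proposition~\ref{max flat prop} for the nullity equality and the observation that cyclic flats contained in $X$ avoid $L^*$ to match up the sums, then split into cases according to whether $X-L^*$ is itself a cyclic flat. Your packaging via the single indexing-set identity $\{Z\in\mathcal{Z}(M):Z\subset X\}=\{Z\in\mathcal{Z}(M):Z\subseteq X-L^*\}$ is marginally more streamlined, but the argument is the same.
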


\begin{proof}
    First assume that $X-L^*$ is not a cyclic flat. Then, since no cyclic flats in $X$ can contain elements of $L^*$ we have that the collection of cyclic flats strictly contained in $X-L^*$ is the same as the collection of cyclic flats strictly contained in $X$. Therefore
    \[
    \sum_{Z \in \mathcal{Z}(M): Z \subset X} \gamma(Z) = \sum_{Z \in \mathcal{Z}(M): Z \subset X-L^*} \gamma(Z).
    \]

    Also by Proposition~\ref{max flat prop} we have that $n(X)=n(X-L^*)$ and therefore
    \[
    n(X) - \sum_{Z \in \mathcal{Z}(M): Z \subset X} \gamma(Z) = n(X-L^*) - \sum_{Z \in \mathcal{Z}(M): Z \subset X-L^*} \gamma(Z)
    \]
    which is of course
    \[
    \gamma(X)=\gamma(X-L^*)
    \]
    as required.

    On the other hand if $X-L^*$ is a cyclic flat then the collection of cyclic flats strictly contained in $X$, that is $Z \in \mathcal{Z}(M),Z \subset X$, contains one cyclic flat more than the collection of cyclic flats strictly contained in $X-L^*$, namely, $X-L^*$ is a cyclic flat strictly contained in $X$ but not in $X-L^*$. This means that
    \[
    (Z \in \mathcal{Z}(M):Z \subset X) = (Z \in \mathcal{Z}(M): Z \subset X-L^*) + (X-L^*)
    \]
    this gives us
    \[
    n(X) - \sum_{Z \in \mathcal{Z}(M): Z \subset X} \gamma(Z) = n(X-L^*) - \left( \sum_{Z \in \mathcal{Z}(M): Z \subset X-L^*} \gamma(Z) \right) - \gamma(X-L^*)
    \]
    \[
    \gamma(X) = \gamma(X-L^*) - \gamma(X-L^*) = 0
    \]
    as required.
\end{proof}

Note that if $X$ in Lemma~\ref{find cyclic} is a flat then we have from Proposition~\ref{max flat prop} that $X-L^*$ will be a cyclic flat and so we obtain the following corollary:

\begin{corollary}
\label{flat lemma}
    Let $X$ be a non-cyclic flat in any matroid. Then $\gamma(X)=0$.
\end{corollary}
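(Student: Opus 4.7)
The plan is to combine Proposition~\ref{max flat prop} and Lemma~\ref{find cyclic} in a direct two-step argument. The statement sits at the intersection of these two results: Lemma~\ref{find cyclic} already gives us $\gamma(X) = 0$ precisely when $X-L^*$ is a cyclic flat, and Proposition~\ref{max flat prop} guarantees exactly this condition whenever $X$ is a flat. So the work is just to verify that the flat hypothesis lets us land in the correct case of the lemma.

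First I would let $L^*$ denote the set of coloops of $M|F$. Because $F$ is non-cyclic by assumption, $M|F$ must contain at least one coloop, so $L^* \neq \emptyset$ and in particular $F - L^* \subsetneq F$. Second, since $F$ is a flat, Proposition~\ref{max flat prop} applies and tells us that $F - L^*$ is a cyclic flat of $M$. Third, I would apply Lemma~\ref{find cyclic} to the set $X = F$: $F$ is non-cyclic, and we have just shown that $F - L^*$ is a cyclic flat, which is exactly the second case of the lemma. The lemma therefore concludes $\gamma(F) = 0$, as required.

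There is no real obstacle here; the corollary is essentially a packaging of the two preceding results so that one does not have to invoke coloops of $M|F$ explicitly whenever one wants to conclude that a non-cyclic flat contributes nothing to the $\gamma$ recursion. The only thing one needs to be slightly careful about is that the flat hypothesis is used in two ways simultaneously (to guarantee $F - L^*$ is a flat, and hence, combined with the removal of coloops, that it is a cyclic flat), both of which are already packaged inside Proposition~\ref{max flat prop}.
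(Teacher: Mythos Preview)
Your proof is correct and follows exactly the approach the paper takes: use Proposition~\ref{max flat prop} (with the flat hypothesis) to see that $F-L^*$ is a cyclic flat, then invoke the second case of Lemma~\ref{find cyclic} to conclude $\gamma(F)=0$. The paper states this in a single sentence preceding the corollary, but the argument is the same.
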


\begin{lemma}
\label{containment lemma}
	Let $X$ be a flat in a strict gammoid. Then for every $x \in X$ such that $x$ is not a coloop of $M|X$, there is some cyclic flat $Z_x \subseteq X$ such that $x \in Z_x$ and $\gamma(Z_x)>0$.
\end{lemma}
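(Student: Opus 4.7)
The plan is to reduce the lemma to results already established about the maximal representation of a strict gammoid. First I would fix a maximal representation $D_M$ of $M$ (which exists by the discussion following Lemma~\ref{add arc}). Let $L^*$ denote the set of coloops of $M|X$. Since $X$ is a flat of $M$, Proposition~\ref{max flat prop} applies and tells us that $Z := X - L^*$ is a cyclic flat of $M$. By hypothesis $x$ is not a coloop of $M|X$, so $x \notin L^*$ and therefore $x \in Z$.

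Next I would invoke Corollary~\ref{union corollary}, which states that any cyclic flat of $M$ is a union of closed forward neighbourhoods of $D_M$. Applied to $Z$, this produces a vertex $v$ of $D_M$ with
\[
x \;\in\; N^+_{D_M}[v] \;\subseteq\; Z \;\subseteq\; X.
\]
To finish, I would appeal to Theorem~\ref{theorem 23}, which characterises the closed forward neighbourhoods of a maximal representation as exactly the cyclic flats of $M$ with strictly positive $\gamma$ value. Consequently $F_x := N^+_{D_M}[v]$ is a cyclic flat of $M$ contained in $X$, it contains $x$, and satisfies $\gamma(F_x) > 0$, which is exactly what the lemma asks for.

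There is no real obstacle here: the entire content of the argument has been packaged into the earlier structural results. The only step that deserves a brief sentence in the written proof is verifying that $x$ genuinely lies in the cyclic part $X - L^*$, which follows immediately from the assumption that $x$ is not a coloop of $M|X$; the rest is a direct chain from Proposition~\ref{max flat prop} to Corollary~\ref{union corollary} to Theorem~\ref{theorem 23}.
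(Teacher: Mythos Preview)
Your argument is correct and there is no circularity: Proposition~\ref{max flat prop}, Corollary~\ref{union corollary}, and Theorem~\ref{theorem 23} all appear before this lemma and none of them depend on it. The only small point worth making explicit is that the vertex $v$ you obtain satisfies $v\notin S$ (so that Theorem~\ref{theorem 23} applies in its intended sense); this is automatic in the maximal representation since vertices of $S$ have no outgoing arcs, hence their closed neighbourhoods are singletons and cannot be the neighbourhoods produced by the proof of Corollary~\ref{union corollary}.

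Your route is genuinely different from the paper's. The paper argues purely with the $\gamma$ function: assuming no cyclic flat $F_x\subseteq X$ with $x\in F_x$ has $\gamma(F_x)>0$, it computes $\gamma(X-x)$ directly and obtains $\gamma(X-x)=-1$, contradicting Corollary~\ref{gamma}. That proof is entirely self-contained within the $\gamma$ calculus and never touches a graph representation. Your proof instead cashes in the structural machinery of Section~3.3, reducing the lemma to the already-proved identification of closed forward neighbourhoods in a maximal representation with positive-$\gamma$ cyclic flats. Your approach is shorter and more conceptual once that machinery is in place; the paper's approach has the virtue of using nothing beyond the recursive definition of $\gamma$ and would go through verbatim for any matroid with all $\gamma$ values nonnegative, without needing to realise it as a strict gammoid first.
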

\begin{proof}
	Let $x$ be an element of $X$ and let there be no cyclic flat $Z_x \subseteq X$ such that $x \in Z_x$ and $\gamma(Z_x)>0$. Clearly we have that if $X$ is a cyclic flat $\gamma(X)=0$ since $x \in X$. By the definition of $\gamma$ we have
	\[\gamma(X) = n(X) - \sum_{Z \in \mathcal{Z}(M): Z\subset X} \gamma(Z).\]

	If $X$ is cyclic then we can rearrange this as follows:
	\begin{equation} \label{eq1,part2}
		n(X) = \sum_{Z \in \mathcal{Z}(M): Z \subseteq X} \gamma(Z).
	\end{equation}

    If $X$ is not cyclic then by Proposition~\ref{max flat prop} there exists a unique maximal cyclic flat $Z_0 \subseteq X$ and $n(X)=n(Z_0)$. Since $Z_0$ is unique and maximal we see that all cyclic flats contained in $X$ are contained in $Z_0$, this gives us
	\[n(X) = n(Z_0) = \sum_{Z \in \mathcal{Z}(M): Z \subseteq X} \gamma(Z).\]

	Therefore \eqref{eq1,part2} still holds even though $X$ is not cyclic. But since $x$ is not a coloop of $M|X$ we have that $r(X- x) = r(X)$ and so
	\begin{equation}\label{eq2}
	n(X- x) = |X - x| - r(X- x) = |X|-r(X) - 1 = n(X)-1.
	\end{equation}	

    We now examine the sum of $\gamma$ values of cyclic flats contained in $X$. If $X$ is not a cyclic flat it is not in the sum and if it is a cyclic flat then it contains $x$ and so must have $\gamma$ value zero by assumption, either way we can remove it from the sum without changing the value of the sum.
    
    Similarly, every other cyclic flat contained in $X$ that contains $x$ must have $\gamma$ value of zero and so we can remove all of these cyclic flats from the sum as well, leaving only cyclic flats contained in $X-x$.

	\begin{equation}\label{eq3}	
	\sum_{Z \in \mathcal{Z}(M): Z\subseteq X} \gamma(Z) = \sum_{Z \in \mathcal{Z}(M): Z\subset X} \gamma(Z) = \sum_{Z \in \mathcal{Z}(M): Z\subset X- x} \gamma(Z)
	\end{equation}
	where we do not need $\subseteq$ for the last sum because if such a cyclic flat existed it would clearly contain $x$ in its closure. We now examine the $\gamma$ value of $X- x$ to get
	\[\gamma(X- x) = n(X- x) - \sum_{Z \in \mathcal{Z}(M): Z\subset X- x} \gamma(Z)\]
	and by \eqref{eq2} this is
	\[ \gamma(X- x) = n(X) - 1 - \sum_{Z \in \mathcal{Z}(M): Z\subset X- x} \gamma(Z)\]
	substituting in \eqref{eq1,part2} gives us
	\[\gamma(X- x) = \left( \sum_{Z \in \mathcal{Z}(M): Z \subseteq X} \gamma(Z) \right) - 1 - \sum_{Z \in \mathcal{Z}(M): Z\subset X- x} \gamma(Z)\]
	then we use \eqref{eq3} to get
	\[\gamma(X- x) =\left( \sum_{Z \in \mathcal{Z}(M): Z\subset X- x} \gamma(Z) \right) - 1 - \sum_{Z \in \mathcal{Z}(M): Z\subset X- x} \gamma(Z)\]
	cancelling gives
	\[\gamma(X- x) = -1\]
	which contradicts the fact that our matroid was a strict gammoid, giving us the result.
\end{proof}

\subsection{The Lattice of Cyclic Flats}

By Definition~\ref{cyclic flat axioms} we see that the collection of cyclic flats of a matroid forms a lattice under inclusion. Specifically if $Z_0$ and $Z_1$ are cyclic flats then their join, $Z_0 \lor Z_1$, is the closure of their union, and their meet, $Z_0 \land Z_1$ is the union of all circuits contained in their intersection. Note that the set of all loops of a matroid is a cyclic flat even if it is empty. If given a set of cyclic flats $\mathcal{Z} = \{Z_0, Z_1, \ldots , Z_n\}$ we define $\lor \mathcal{Z}$ to be $Z_0 \lor Z_1 \lor \dots \lor Z_n$ and $\land \mathcal{Z}$ to be $Z_0 \land Z_1 \land \dots \land Z_n$. It is clear that given any pair of cyclic flats in a gammoid their join and meet can be obtained in polynomial time, however, there may be exponentially many cyclic flats in the matroid and so it is not feasible to examine all of them in polynomial time.

Our goal is to examine a single-element deletion from a strict gammoid and determine if the resulting gammoid remains a strict gammoid. Naively in order to do this we would have to check the $\gamma$ values of every subset of the ground set of the resulting matroid which clearly could not be done in polynomial time. In fact, even checking all the cyclic flats could not be done in polynomial time. However, recall that by Corollary~\ref{bound corollary} the number of cyclic flats with nonzero $\gamma$ values in the original matroid is bounded by the size of the ground set. While that may not be true for our new matroid we can use this information to help us obtain information about it in polynomial time. As such we first examine the relationship between the lattices of cyclic flats for each matroid.

Let $M^+$ be a matroid with $e \in E(M^+)$, let $M=M^+ \backslash e$. We obtain a function $f: \mathcal{Z}(M^+) \rightarrow 2^{E(M)}$ which is defined as $f(Z) = Z-e$. Note that $f(Z)$ may or may not be a cyclic flat in $M$. We now prove the following properties about $f$.

\begin{lemma}
\label{f lemma}
	The function $f$ has the following properties for all $Z_0, Z_1 \in \mathcal{Z}(M^+)$ and $Z \in \mathcal{Z}(M)$:
	\begin{enumerate}[label=\ref{f lemma}.\arabic*.,ref=\ref{f lemma}.\arabic*]
		\item \label{f lemma.1} If $f(Z_0) = f(Z_1)$ then $Z_0 = Z_1$.
		\item \label{f lemma.2} $cl_M(f(Z_0) \cup f(Z_1)) = f(cl_{M^+}(Z_0 \cup Z_1)) $
		\item \label{f lemma.3} Let $L^*$ be the set of all coloops of $M|(f(Z_0) \cap f(Z_1))$. We have that 
        \[(f(Z_0) \cap f(Z_1))-L^* \subseteq f(Z_0 \land Z_1)\] 
        and if $f(Z_0 \land Z_1)$ is cyclic in $M$ then 
        \[(f(Z_0) \cap f(Z_1))-L^* = f(Z_0 \land Z_1).\]
		\item \label{f lemma.4} There exists some $Z' \in \mathcal{Z}(M^+)$ such that $f(Z') = Z$.
		\item \label{f lemma.5} Either $f(Z_0) \in \mathcal{Z}(M)$ or $\gamma_M(f(Z_0))=0$.
	\end{enumerate}
\end{lemma}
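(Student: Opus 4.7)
The plan is to exploit the standard deletion--closure identity $cl_M(X)=cl_{M^+}(X)-e$ for $X\subseteq E(M)$, together with the basic fact that a cyclic set has no coloops in its restriction (so every element of a cyclic flat lies in the closure of the rest of the flat). Properties 1, 2, 4, and 5 drop out quickly from these two observations; the meet property 3 will be the main technical point.

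For 1, suppose $f(Z_0)=f(Z_1)$ with $Z_0\ne Z_1$; their symmetric difference is $\{e\}$, so without loss of generality $Z_1=Z_0+e$ with $Z_0$ a flat of $M^+$. Then $e\notin cl_{M^+}(Z_0)$, so $e$ is a coloop of $M^+|Z_1$, contradicting cyclicity of $Z_1$. For 2, apply the closure identity on both sides to reduce the claim to $cl_{M^+}((Z_0\cup Z_1)-e)=cl_{M^+}(Z_0\cup Z_1)$; if $e\in Z_i$, cyclicity of $Z_i$ gives $e\in cl_{M^+}(Z_i-e)$, which supplies the nontrivial inclusion. For 5, the same identity yields $cl_M(Z_0-e)=cl_{M^+}(Z_0)-e=Z_0-e$, so $f(Z_0)$ is always a flat of $M$; if it fails to be cyclic, Corollary~\ref{flat lemma} gives $\gamma_M(f(Z_0))=0$. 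For 4, given $Z\in\mathcal{Z}(M)$ set $Z'=cl_{M^+}(Z)$; since $e\notin Z$, $Z$ remains cyclic in $M^+$, any element of $Z'\setminus Z$ lies in the $M^+$-closure of a proper subset of $Z'$ and so is not a coloop of $M^+|Z'$, so $Z'$ is a cyclic flat of $M^+$, and $Z'-e=cl_M(Z)=Z$.

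The main obstacle is property 3. Write $A=Z_0\cap Z_1$ and let $L^*_+$ denote the coloops of $M^+|A$, so that $Z_0\land Z_1=A-L^*_+$ by Proposition~\ref{max flat prop}. The key auxiliary fact I need is: if $x$ is a coloop of a matroid $N$ and $y\ne x$, then $x$ remains a coloop of $N\backslash y$. This is a short basis-comparison, splitting on whether $y$ itself is a coloop of $N$. Applying it to $N=M^+|A$ and $y=e$ gives $L^*_+-e\subseteq L^*$, where $L^*$ denotes the coloops of $M|(A-e)$; substituting into $f(Z_0\land Z_1)=A-L^*_+-e$ and $(f(Z_0)\cap f(Z_1))-L^*=(A-e)-L^*$ yields the desired containment. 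For the equality clause, assume $f(Z_0\land Z_1)$ is cyclic in $M$; then no element of $A-L^*_+-e$ can be a coloop of $M|(A-e)$, so $L^*\cap f(Z_0\land Z_1)=\emptyset$. Combined with $L^*\subseteq A-e$, this forces $L^*\subseteq L^*_+-e$, and the reverse inclusion already in hand gives $L^*=L^*_+-e$, upgrading the containment to equality. The delicate point throughout is the bookkeeping around whether $e$ lies in $A$ or $L^*_+$, which is handled uniformly by keeping the subtractions $-e$ explicit at every step.
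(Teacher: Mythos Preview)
Your proof is correct and follows essentially the same approach as the paper. The only organisational difference worth noting is in part~3: the paper argues element by element (an element of $(f(Z_0)\cap f(Z_1))-L^*$ lies in an $M$-circuit, hence an $M^+$-circuit, hence in $Z_0\land Z_1$), whereas you package the same idea as the inclusion $L^*_+-e\subseteq L^*$ between coloop sets and then do set arithmetic. Both routes encode the same fact that coloops survive deletion, and your explicit use of the identity $cl_M(X)=cl_{M^+}(X)-e$ makes parts~2, 4, and 5 slightly cleaner than the paper's circuit-chasing.
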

\begin{proof}
	For the first result note that if $f(Z_0) = f(Z_1)$ and $Z_0 \neq Z_1$ then the symmetric difference of $Z_0$ and $Z_1$ would be the singleton element $\{e\}$ which is impossible for cyclic flats \cite{oxley}.

	For the second result note that since $Z_0$ and $Z_1$ are both cyclic flats in $M^+$ we have that $e$ is not a coloop of either $M^+|Z_0$ or $M^+|Z_1$ and therefore clearly isn't a coloop of $M^+|Z_0 \cup Z_1$. Hence $r_{M^+}((Z_0-e) \cup(Z_1 -e)) = r_{M^+}(Z_0 \cup Z_1)$. Hence $cl_{M^+}(Z_0 \cup Z_1) = cl_{M^+}((Z_0 - e) \cup (Z_1 - e))$. Which means that
    \[
    cl_{M^+}(Z_0 \cup Z_1) = cl_{M^+}(f(Z_0) \cup f(Z_1)).
    \]
    Since $cl_{M^+}(Z_0 \cup Z_1)$ is a cyclic flat of $M^+$ we can apply $f$ to it to get
    \[
    f(cl_{M^+}(Z_0 \cup Z_1)=cl_{M^+}(f(Z_0) \cup f(Z_1))-e.
    \]
    However, since $e \notin f(Z_0) \cup f(Z_1)$ we have that $cl_{M^+}(f(Z_0) \cup f(Z_1))-e=cl_M(f(Z_0) \cup f(Z_1))$ and therefore we obtain the desired equality.

    For the third result; any element, $e_0$, in $f(Z_0) \cap f(Z_1)$ that is not in $L^*$ must be in a circuit, $C_0$ of $M$, contained in $f(Z_0) \cap f(Z_1)$. Any circuit of $M$ is also a circuit of $M^+$ and so $e_0$ is not a coloop of $M^+|(f(Z_0) \cap f(Z_1))$ and therefore also not a coloop of $M^+|(Z_0 \cap Z_1)$. Hence $e_0 \in (Z_0 \land Z_1)$ and therefore also in $f(Z_0 \land Z_1)$ as required.

    If $f(Z_0 \land Z_1)$ is cyclic in $M$ then any element $e_0 \in f(Z_0 \land Z_1)$ must be contained in a circuit $C_0$ in $M$ that is contained in $f(Z_0 \land Z_1)$. Since $f(Z_0 \land Z_1) \subseteq (Z_0 \cap Z_1)-e$ we see that $C_0 \subseteq (Z_0 \cap Z_1)-e$. Since $(Z_0\cap Z_1)-e = (Z_0-e) \cap (Z_1-e)$ we see that $C_0$ is in both $f(Z_0)$ and $f(Z_1)$. Hence $e_0$ cannot be in $L^*$ and so $e_0 \in f(Z_0) \cap f(Z_1)-L^*$ as required.

	For the fourth result we find $Z'$ by taking $cl_{M^+}(Z)$. Since every element in $cl_{M^+}(Z)-Z$ must complete a circuit with elements in $Z$ that element would complete that same circuit with those elements in $M$ and therefore the only element that can be in $cl_{M^+}(Z)-Z$ is $e$. Hence $cl_{M^+}(Z)=Z'$.

    For the last result we have that $f(Z_0)$ is a flat of $M$ from \cite[Proposition 3.3.7(ii)]{oxley} and therefore the result follows from Corollary~\ref{flat lemma}.
\end{proof}

Let $\mathcal{L}^+$ be the lattice of cyclic flats of $M^+$ and let $\mathcal{L}$ be the lattice of cyclic flats of $M$. Lemma~\ref{f lemma} tells us that $f$ is almost an isomorphism from $\mathcal{L}^+$ and $\mathcal{L}$. The `almost' coming from the fact that sometimes $f$ sends cyclic flats to sets that are not cyclic flats and we have $f(Z_0)\land f(Z_1) \subseteq f(Z_0 \land Z_1)$ rather than $f(Z_0)\land f(Z_1) = f(Z_0 \land Z_1)$. We will use this near-isomorphic structure to obtain the results we need.

\begin{lemma}
\label{first lemma}
	Let $X$ be a cyclic set in $M^+$ such that $e \in X$. Then
	\[\gamma_{M^+}(X)  - \gamma_M(X-e) -1 = \sum_{Z \in \mathcal{Z}(M^+): Z \subset X} (\gamma_M(Z-e) - \gamma_{M^+}(Z)).\]
 
	On the other hand if $e \notin X$ and $X$ is a flat of $M^+$ then $\gamma_{M^+}(X)-\gamma_M(X-e)=0$.
\end{lemma}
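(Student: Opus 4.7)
The plan is to unfold the recursive definition of $\gamma$ on both sides of the desired identity and then use Lemma~\ref{f lemma} to translate the sum over cyclic flats of $M$ contained in $X-e$ into a sum over cyclic flats of $M^+$ contained in $X$. Concretely I expand
\[
\gamma_{M^+}(X) = n_{M^+}(X) - \sum_{Z \in \mathcal{Z}(M^+): Z \subset X} \gamma_{M^+}(Z)
\]
and the analogous formula for $\gamma_M(X-e)$, so that $\gamma_{M^+}(X) - \gamma_M(X-e)$ decomposes into a nullity contribution plus two summations that I want to match up via the map $f$.

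For the first part, where $X$ is cyclic in $M^+$ and $e \in X$, I begin with the nullity piece: cyclicity means $e$ is not a coloop of $M^+|X$, so $r_{M^+}(X) = r_{M^+}(X-e) = r_M(X-e)$, giving $n_{M^+}(X) - n_M(X-e) = 1$. Next I rewrite $\sum_{Z' \in \mathcal{Z}(M): Z' \subset X-e} \gamma_M(Z')$ as a sum over $Z \in \mathcal{Z}(M^+)$: every such $Z'$ equals $f(Z)$ for a unique $Z \in \mathcal{Z}(M^+)$ by parts~\ref{f lemma.1} and \ref{f lemma.4}, and by part~\ref{f lemma.5} any extra $Z$ with $f(Z) \notin \mathcal{Z}(M)$ contributes zero and can be added in freely. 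The main obstacle is then verifying that the two index sets really coincide, i.e.\ $\{Z \in \mathcal{Z}(M^+) : f(Z) \subset X-e\} = \{Z \in \mathcal{Z}(M^+) : Z \subset X\}$. A case split on whether $e \in Z$ reduces this to the single awkward possibility $e \notin Z$ and $Z = X-e$, in which $f(Z) = X-e$ fails to be a proper subset of itself. But cyclicity of $X$ forces $X \subseteq cl_{M^+}(X-e)$, so $e \in cl_{M^+}(X-e) \setminus (X-e)$ and $X-e$ is not a flat of $M^+$ at all, ruling the case out. Substituting $f(Z) = Z-e$ and combining the pieces then produces the claimed identity.

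For the second part, where $e \notin X$ and $X$ is a flat of $M^+$, note that $X-e = X$, so the statement reduces to $\gamma_{M^+}(X) = \gamma_M(X)$. If $X$ is not cyclic in $M^+$ then $M|X = M^+|X$ forces it to be non-cyclic in $M$ too, and $cl_M(X) \subseteq cl_{M^+}(X) = X$ makes it a flat of $M$, so Corollary~\ref{flat lemma} makes both sides vanish. If $X$ is cyclic, I would proceed by induction on $|X|$. The crucial structural point is that, because $e \notin X$ and $X$ is a flat of $M^+$, any $Z \in \mathcal{Z}(M^+)$ with $Z \subset X$ avoids $e$ and remains a cyclic flat of $M$, and conversely any $Z' \in \mathcal{Z}(M)$ with $Z' \subset X$ has $cl_{M^+}(Z') \subseteq X$ hence avoids $e$ and so is also in $\mathcal{Z}(M^+)$. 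Thus the sums in the two recursive formulas are indexed by the same set, the nullity terms already agree, and the inductive hypothesis applied to each smaller $Z$ in the sum finishes the argument.
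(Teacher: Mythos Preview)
Your proof is correct and follows essentially the same approach as the paper: expand the recursive definitions of $\gamma$, compute the nullity difference from cyclicity, and re-index the sum over $\mathcal{Z}(M)$ as a sum over $\mathcal{Z}(M^+)$ via Lemma~\ref{f lemma}, with the key observation that $X-e$ cannot be a flat of $M^+$ because $e \in cl_{M^+}(X-e)$. For the second part the paper argues directly that the cyclic flats contained in $X$ and their nullities coincide in both matroids and concludes $\gamma_{M^+}(X)=\gamma_M(X)$; your explicit induction on $|X|$ (with the non-cyclic case handled separately via Corollary~\ref{flat lemma}) makes the implicit recursion in that step precise but does not differ in substance.
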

\begin{proof}
	To see the first result we begin with the following equation derived from the definition of the $\gamma$ function:
    \begin{multline*}
	\gamma_{M^+}(X) - \gamma_M(X-e) = n_{M^+}(X) - \left( \sum_{Z \in \mathcal{Z}(M^+): Z \subset X} \gamma_{M^+}(Z) \right)\\ 
    - n_M(X-e) +\sum_{Z \in \mathcal{Z}(M): Z \subset X-e} \gamma_M(Z).
    \end{multline*}

	But since $e \in X$ and $X$ is cyclic in $M^+$ we see that $e$ is not a coloop of $M^+|X$, hence $r_{M^+}(X) = r_{M^+}(X- e)$ but of course $r_{M^+}(X-e) = r_M(X-e)$ and so $r_{M^+}(X) = r_M(X-e)$ and therefore $n_{M^+}(X)-1 = n_M(X-e)$. Hence we have
	\[\gamma_{M^+}(X) - \gamma_M(X-e) = 1- \sum_{Z \in \mathcal{Z}(M^+): Z \subset X} \gamma_{M^+}(Z) + \sum_{Z \in \mathcal{Z}(M): Z \subset X-e} \gamma_M(Z).\]
 
	We now wish to combine the two sums into one. In order to do this we must first change the range of the second sum from $Z \in \mathcal{Z}(M), Z \subseteq X-e$ to $Z \in \mathcal{Z}(M^+),Z \subseteq X$. Obviously these terms are not necessarily cyclic flats of $M$ so the terms of the sum will become $\gamma_M(Z-e)$.

    We need to show that changing the sum in this way leaves its total value the same. First we see by Lemma~\ref{f lemma.4} that we have not lost any terms and so we need only worry about adding additional terms. We see from Lemma~\ref{f lemma.1} that we are not adding any terms that we already have. Therefore we must only be adding terms that are not in $Z \in \mathcal{Z}(M): Z \subset X-e$.

    We are changing two parts to this range. First we are expanding from looking at cyclic flats of $M$ to cyclic flats of $M^+$. In doing so we may add terms which are not cyclic flats of $M$. However, by Lemma~\ref{f lemma.5} all of these will have $\gamma_M$ value of 0 and so adding them does not change the sum.

    We are also expanding our range from $Z \subset X-e$ to $Z \subset X$. The only new subset of $E(M)$ that this change could potentially give us is $X-e$ itself. However since $X$ is a cyclic set of $M^+$ we must have that $e \in cl_{M^+}(X-e)$ and so $X-e$ is not a cyclic flat of $M^+$, therefore we are not adding $X-e$ to the range of the sum.

	This means that all the terms we are adding have $\gamma_M$ values of 0 and so do not change the sum, this allows us to obtain.
	\[\gamma_{M^+}(X) - \gamma_M(X-e) = 1- \sum_{Z \in \mathcal{Z}(M^+): Z \subset X} \gamma_{M^+}(Z) + \sum_{Z \in \mathcal{Z}(M^+): Z \subset X} \gamma_M(Z-e)\]
	\[\gamma_{M^+}(X) - \gamma_M(X-e) -1 = \sum_{Z \in \mathcal{Z}(M^+): Z \subset X} ( \gamma_M(Z-e) - \gamma_{M^+}(Z) )\]
	as required. For the second result note that for any set $X' \subseteq X$ we have that $n_M(X')=n_{M^+}(X')$ since no elements have been removed from $X'$. Similarly, for any flat $F \subseteq X$ in $M^+$ we have that $F$ is still a flat in $M$ \cite{oxley} and for any cyclic flat $Z \not \subseteq X$ in $M^+$ we cannot have $Z-e \subseteq X$ as since $Z$ is cyclic $e \in cl_{M^+}(Z-e)$ and therefore $e$ would be in $cl_{M^+}(X)$, contradicting the fact that $X$ is a flat that does not contain $e$.

    Hence all cyclic flats in $X$ are the same in both $M$ and $M^+$ and all have the same nullities, as does $X$ itself. Hence $\gamma_{M^+}(X)=\gamma_M(X)$.
\end{proof}

To simplify notation we will use $\Delta \gamma(X)$ to mean $\gamma_M(X-e)-\gamma_{M^+}(X)$ for any $X \subseteq E(M^+)$.
Also for a collection $\mathcal{X}$ of sets $X \subset E(M^+)$ we will use $\down(\mathcal{X})$ to mean the collection of cyclic flats of $M^+$ that are each contained in at least one of the sets $X \in \mathcal{X}$. Note that all cyclic flats $Z \in \mathcal{X}$ are contained in $\down(\mathcal{X})$. If $\mathcal{X}$ consists of a single set $X$ we simplify notation by writing $\down(X)$ instead of $\down(\{X\})$. This allows us to rewrite the equation from Lemma~\ref{first lemma} as

\begin{equation} \label{my equation :)}
    -\Delta \gamma(X)-1 = \sum_{Z \in \down(X)-X} \Delta \gamma(Z).
\end{equation}

We use this notation for the next results:

\begin{lemma}
\label{meet lemma}
	Let $X$ be a cyclic set in $M^+$ with $e \in X$. Let $\mathcal{Y}$ be a non-empty collection of cyclic flats in $M^+$, each properly contained in $X$ such that $e \in \land \mathcal{Y}$. Then
	$$-\Delta \gamma(X) = \sum_{\substack{Z \in \down(X)-X\\ Z \notin \down(\mathcal{Y}) }} \Delta \gamma(Z)$$
\end{lemma}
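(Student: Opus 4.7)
First I would rewrite the goal in a more convenient form. Since every $Y \in \mathcal{Y}$ is a cyclic flat strictly contained in $X$, every cyclic flat in $\down(\mathcal{Y})$ is strictly contained in $X$, so $\down(\mathcal{Y}) \subseteq \down(X) - X$. Splitting the sum on the right-hand side of \eqref{my equation :)} by membership in $\down(\mathcal{Y})$ shows that the conclusion of the lemma is equivalent to the single auxiliary identity
\[
\sum_{Z \in \down(\mathcal{Y})} \Delta \gamma(Z) = -1.
\]

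I would prove this identity by induction on $|\mathcal{Y}|$. For the base case $\mathcal{Y} = \{Y\}$, the set $Y$ is itself a cyclic flat containing $e$, so \eqref{my equation :)} applied with $X$ replaced by $Y$ gives $-\Delta\gamma(Y) - 1 = \sum_{Z \in \down(Y) - Y} \Delta\gamma(Z)$, and rearranging yields $\sum_{Z \in \down(Y)} \Delta\gamma(Z) = -1$. For the inductive step, fix some $Y_0 \in \mathcal{Y}$ and set $\mathcal{Y}' = \mathcal{Y} - \{Y_0\}$ and $\mathcal{Y}'' = \{Y_0 \land Y' : Y' \in \mathcal{Y}'\}$. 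Inclusion--exclusion on $\down(\mathcal{Y}) = \down(\{Y_0\}) \cup \down(\mathcal{Y}')$ gives
\[
\sum_{Z \in \down(\mathcal{Y})} \Delta\gamma(Z) = \sum_{Z \in \down(Y_0)} \Delta\gamma(Z) + \sum_{Z \in \down(\mathcal{Y}')} \Delta\gamma(Z) - \sum_{Z \in \down(Y_0) \cap \down(\mathcal{Y}')} \Delta\gamma(Z),
\]
and the crux is to identify $\down(Y_0) \cap \down(\mathcal{Y}')$ with $\down(\mathcal{Y}'')$. A cyclic flat $Z$ lies in the intersection iff $Z \subseteq Y_0$ and $Z \subseteq Y'$ for some $Y' \in \mathcal{Y}'$; since $Z$ is cyclic and sits inside $Y_0 \cap Y'$, it is contained in the largest cyclic subset of $Y_0 \cap Y'$, which is precisely $Y_0 \land Y'$. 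The reverse inclusion is immediate. Both $\mathcal{Y}'$ and $\mathcal{Y}''$ satisfy the hypotheses of the lemma: elements of $\mathcal{Y}''$ are meets in $\mathcal{L}^+$ hence cyclic flats, each sits inside $Y_0 \subsetneq X$, and $e \in \land \mathcal{Y} \subseteq Y_0 \land Y'$ gives $e \in \land \mathcal{Y}''$. Since $|\mathcal{Y}'|, |\mathcal{Y}''| \leq |\mathcal{Y}| - 1$, the induction hypothesis applies and each of the three sums on the right equals $-1$, yielding $-1 + (-1) - (-1) = -1$.

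The only real subtlety is the set-theoretic identification $\down(Y_0) \cap \down(\mathcal{Y}') = \down(\mathcal{Y}'')$, which rests on the characterisation of the meet in $\mathcal{L}^+$ as the union of circuits contained in the intersection. Everything else is bookkeeping: combining \eqref{my equation :)} with inclusion--exclusion on the lattice of cyclic flats of $M^+$.
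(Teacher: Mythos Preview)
Your proof is correct. The underlying induction is the same as the paper's---remove one element $Y_0$ from $\mathcal{Y}$, pass to the collection of meets $\{Y_0 \land Y' : Y' \in \mathcal{Y}\setminus\{Y_0\}\}$, identify $\down(Y_0)\cap\down(\mathcal{Y}\setminus\{Y_0\})$ with $\down$ of that collection, and recurse---but your initial reformulation to the standalone identity $\sum_{Z \in \down(\mathcal{Y})} \Delta\gamma(Z) = -1$ is a genuine simplification. The paper works directly with the original statement and must carry the ambient set $X$ through the induction; this forces it to choose $Y_k$ \emph{maximal} in $\mathcal{Y}$ so that each meet $Y \land Y_k$ is \emph{properly} contained in $Y_k$, as the lemma's hypothesis demands for the new ambient set. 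Your auxiliary identity has no ambient set and hence no proper-containment requirement, so $Y_0$ may be chosen arbitrarily and the inclusion--exclusion is cleaner.
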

\begin{proof}
	We prove this by contradiction. Assume that the result is false and that we have chosen $M^{+}$, $e$, $X$, and $\mathcal{Y}$ amongst all counterexamples so that $|\mathcal{Y}|$ is as small as possible. Since $\mathcal{Y}$ is non-empty this means that we first examine what happens when $|\mathcal{Y}|=1$. To do this we start with \eqref{my equation :)}
	$$-\Delta \gamma(X)  -1 = \sum_{Z \in \down(X)-X} \Delta \gamma(Z)$$ 
    and then let $\mathcal{Y}$ be just one cyclic flat $Y \in \mathcal{Z}(M^+)$. We can then take out the terms in the sum that are contained within $Y$.

	$$-\Delta \gamma(X) - 1 = \sum_{\substack{Z \in \down(X)-X\\Z \notin \down(Y)}} \Delta \gamma(Z) + \left( \sum_{Z \in \down(Y)-Y} \Delta \gamma(Z) \right) + \Delta \gamma (Y)$$
	but since $e \in Y$ we can use \eqref{my equation :)} again, replacing the second summation with a negative $\Delta \gamma(Y)$ term
	$$-\Delta \gamma(X) - 1 = \left( \sum_{\substack{Z \in \down(X)-X\\Z \notin \down(Y)}} \Delta \gamma(Z) \right) -\Delta \gamma(Y) - 1 + \Delta \gamma (Y)$$
	we then cancel to obtain
	$$-\Delta \gamma(X) = \sum_{\substack{Z \in \down(X)-X\\Z \notin \down(Y)}} \Delta \gamma(Z)$$
	which contradicts the fact that $\mathcal{Y}$ provides a counterexample. We therefore let $|\mathcal{Y}| =k>1$. Let $Y_k$ be a maximal element of $\mathcal{Y}$ and note that since $e\in \land \mathcal{Y}$ we must have that $e \in \land(\mathcal{Y}-Y_k)$ as well and therefore by our minimality assumption we have
	$$-\Delta \gamma(X) = \sum_{\substack{Z \in \down(X)-X\\Z \notin \down(\mathcal{Y}-Y_k) }} \Delta \gamma(Z)$$
	but once again we can take out the terms in the sum that are in $\down(Y_k)$ to get
	$$-\Delta \gamma(X) = \sum_{\substack{Z \in \down(X)-X\\Z \notin \down(\mathcal{Y}) }} \Delta \gamma(Z) + \left( \sum_{\substack{Z \in \down(X)-X\\ Z \notin \down(\mathcal{Y}-Y_k) \\ Z \in \down(Y_k)-Y_k }} \Delta \gamma(Z) \right) + \Delta \gamma(Y_k)$$
	however there are various ways to rewrite the range of the second summation term. First note that since $Y_k \subset X$ we do not need the $Z \in \down(X)-X$ condition. Next we construct the collection $\mathcal{Y}'$ by adding $Y \land Y_k$ for each $Y \in (\mathcal{Y}-Y_k)$. We may replace the $Z \notin \down(\mathcal{Y}-Y_k)$ condition with $Z \notin \down(\mathcal{Y}')$ instead as for each set $Y \in \mathcal{Y}-Y_k$ in order for an element to be excluded from the second sum by $Y$ it would have to be in both $\down(Y)$ and $\down(Y_k)$, therefore meaning it would have to be in $\down(Y \land Y_k)$ and therefore in $\down(\mathcal{Y}')$. Hence for this particular sum $Z \notin \down(\mathcal{Y}')$ provides the same restriction as $Z \notin \down(\mathcal{Y}-Y_k)$ does. Hence we can rewrite our equation as
	$$-\Delta \gamma(X) = \sum_{\substack{Z \in \down(X)-X\\Z \notin \down(\mathcal{Y}) }} \Delta \gamma(Z) + \left( \sum_{\substack{Z \in \down(Y_k)-Y_k \\ Z \notin \down(\mathcal{Y}') }} \Delta \gamma(Z) \right) + \Delta \gamma(Y_k)$$
    but then we see that the $\land$ operation is associative and commutative we have that $\land \mathcal{Y}' = \land \mathcal{Y}$ and therefore $e \in \land \mathcal{Y}'$ since $e \in \land{\mathcal{Y}}$. We also have that $e \in Y_k$. Also since $Y_k$ is a maximal element of $\mathcal{Y}$ each element $Y' \in \mathcal{Y}'$ is strictly contained in $Y_k$. We also have that $\mathcal{Y}'$ is non-empty since $|\mathcal{Y}|>1$. And finally we have that $|\mathcal{Y}'|$ is at most $k-1$, hence $Y_k$ and $\mathcal{Y}'$ fulfill all of our conditions and therefore by our minimality assumption we have
	$$-\Delta \gamma(X) = \left( \sum_{\substack{Z \in \down(X)-X\\Z \notin \down(\mathcal{Y}) }} \Delta \gamma(Z) \right) -\Delta \gamma(Y_k) + \Delta \gamma(Y_k)$$
	which, of course, cancels to give us
	$$-\Delta \gamma(X) = \sum_{\substack{Z \in \down(X)-X\\Z \notin \down(\mathcal{Y}) }} \Delta \gamma(Z) $$
	contradicting our assumption and therefore proving the result for all $k$.
\end{proof}

\begin{lemma}
\label{hooray corollary}
	Let $X$ be a cyclic set in $M^+$ with $e \in X$. Let there be some cyclic flat $Z_X$ of $M^+$ such that $Z_X \subset X$ and $\Delta \gamma(Z_X)$ is positive. Let $Z_0 \lor Z_1 \subset X$ for cyclic flats $Z_0$ and $Z_1$ of $M^+$, whenever $Z_0 \subset X$ and $Z_1 \subset X$ and, both $\Delta \gamma(Z_0)$ and $\Delta \gamma(Z_1)$ are positive. Then $\Delta \gamma(X) \geq 0$.

	Similarly let $W$ be a cyclic set in $M^+$ with $e \in W$. Let $Z_0 \lor Z_1 \subset W$ for cyclic flats $Z_0$ and $Z_1$ of $M^+$, whenever $Z_0 \subset W$ and $Z_1 \subset W$ and, both $\Delta \gamma(Z_0)$ and $\Delta \gamma(Z_1)$ are negative. Then $\Delta \gamma(W) \leq 0$.
\end{lemma}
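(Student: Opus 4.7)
The approach is to apply Lemma~\ref{meet lemma} to a carefully chosen collection $\mathcal{Y}$ of cyclic flats that covers every cyclic flat on which $\Delta\gamma$ has the wrong sign, so that the resulting sum contains only terms of the desired sign.

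The first thing to notice is that any cyclic flat $Z$ of $M^+$ with $\Delta\gamma(Z)\neq 0$ must contain $e$: if $e\notin Z$ then, since $Z$ is a flat, the second statement of Lemma~\ref{first lemma} forces $\Delta\gamma(Z)=0$. For the first half of the lemma, let
\[
    \mathcal{P}=\{Z\in\mathcal{Z}(M^+):Z\subset X,\ \Delta\gamma(Z)>0\}.
\]
The hypothesis puts $Z_X\in\mathcal{P}$, so $\mathcal{P}$ is non-empty and every element of $\mathcal{P}$ contains $e$. I would then set
\[
    \mathcal{Y}=\{Z_X\lor P:P\in\mathcal{P}\}.
\]
Each element of $\mathcal{Y}$ is a cyclic flat of $M^+$ strictly contained in $X$: for $P=Z_X$ this is $Z_X\subset X$, and for $P\neq Z_X$ the pairwise hypothesis forces $Z_X\lor P\subset X$. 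Every element of $\mathcal{Y}$ contains the cyclic flat $Z_X$, so $Z_X\subseteq\land\mathcal{Y}$, and therefore $e\in\land\mathcal{Y}$. Lemma~\ref{meet lemma} now applies and gives
\[
    -\Delta\gamma(X)=\sum_{\substack{Z\in\down(X)-X\\Z\notin\down(\mathcal{Y})}}\Delta\gamma(Z).
\]
For every $P\in\mathcal{P}$ we have $P\subseteq Z_X\lor P\in\mathcal{Y}$, so $\mathcal{P}\subseteq\down(\mathcal{Y})$; every remaining summand is therefore $\leq 0$, yielding $\Delta\gamma(X)\geq 0$.

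For the second half the plan is symmetric. Let $\mathcal{N}=\{Z\in\mathcal{Z}(M^+):Z\subset W,\ \Delta\gamma(Z)<0\}$. If $\mathcal{N}=\emptyset$, Lemma~\ref{first lemma} applied to $W$ gives $-\Delta\gamma(W)-1\geq 0$ and we are finished. Otherwise, fix any $Z_W\in\mathcal{N}$ and take $\mathcal{Y}=\{Z_W\lor N:N\in\mathcal{N}\}$. Each $N\in\mathcal{N}$ contains $e$ by the same observation, the pairwise hypothesis keeps each $Z_W\lor N$ strictly below $W$, and $Z_W\subseteq\land\mathcal{Y}$, so Lemma~\ref{meet lemma} again applies and writes $-\Delta\gamma(W)$ as a sum of terms that are all $\geq 0$.

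The main obstacle I anticipate is precisely the first observation: recognising that a non-zero $\Delta\gamma$ value forces $e$ to sit inside the cyclic flat. Without it, one is forced to worry about whether $e$ genuinely lies in the lattice meet $\land\mathcal{Y}$ (as opposed to the ordinary intersection $\bigcap\mathcal{Y}$), and the construction of a suitable $\mathcal{Y}$ becomes considerably more delicate. Once that observation is in hand, each half of the lemma reduces to a single application of Lemma~\ref{meet lemma}.
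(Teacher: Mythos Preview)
Your proposal is correct and follows essentially the same route as the paper: define the collection of cyclic flats below $X$ (respectively $W$) with positive (respectively negative) $\Delta\gamma$, take joins with a fixed member of that collection to obtain $\mathcal{Y}$, observe that this fixed member sits below $\land\mathcal{Y}$ so that $e\in\land\mathcal{Y}$, and apply Lemma~\ref{meet lemma}. The only cosmetic difference is that the paper picks an arbitrary pivot $Y'$ from the collection while you use the hypothesised $Z_X$ itself, and the paper handles the singleton case separately rather than folding it in via $Z_X\lor Z_X=Z_X$.
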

\begin{proof}
    We first examine $X$. Note that our hypothesis is non-trivial as if $X$ is not a flat it is possible for $Z_0 \lor Z_1$ to not be contained in $X$ and even if $X$ is a cyclic flat we could still have $Z_0 \lor Z_1 = X$ in which case $Z_0 \lor Z_1$ would not be properly contained in $X$ as we require.

    Let $\mathcal{Y}$ be the collection of cyclic flats $Y$ in $M^+$ such that $Y \subset X$ and $\Delta \gamma(Y)>0$. Since $\Delta \gamma(Y) \neq 0$ we have from Lemma~\ref{first lemma} that $e \in Y$ for all $Y \in \mathcal{Y}$. Note that since there exists some cyclic flat $Z_X$ of $M^+$ such that $Z_X \subset X$ and $\Delta \gamma(Z_X)>0$ we have that $\mathcal{Y}$ is non-empty.

	Let $Y'$ be some cyclic flat of $\mathcal{Y}$ and let $\mathcal{Y}'$ be the set $\{ Y' \lor Y, \forall Y \in \mathcal{Y}-Y'\}$ if $|\mathcal{Y}|>1$, otherwise let $\mathcal{Y}'=\{Y'\}$. Either way note that since $\mathcal{Y}$ is non-empty we have that $\mathcal{Y}'$ is non-empty as well. We also have that since every cyclic flat of $\mathcal{Y}$ contains $e$ every cyclic flat of $\mathcal{Y}'$ contains $e$ as they all contain at least one cyclic flat of $\mathcal{Y}$. Note also that by hypothesis every cyclic flat of $\mathcal{Y}'$ is strictly contained in $X$ as they are all either joins of cyclic flats strictly contained in $X$ with positive $\Delta \gamma$ values, or simply cyclic flats that are already strictly contained in $X$. Finally we have that $Y' \subseteq \land \mathcal{Y}'$ and therefore $e \in \land \mathcal{Y}'$ and so by Lemma~\ref{meet lemma} we have
	$$-\Delta \gamma(X) = \sum_{\substack{Z \in \down(X)-X\\ Z \notin \down(\mathcal{Y}') }} \Delta \gamma(Z)$$
 
	However, since every cyclic flat of $\mathcal{Y}$ is contained within one or more cyclic flats of $\mathcal{Y}'$ we have that this sum contains no cyclic flats of $\mathcal{Y}$, nor any cyclic flats contained within them. Hence for any $Z$ in this sum $\Delta \gamma(Z)$ must be nonpositive. Hence all terms in the sum are nonpositive and so
	$$-\Delta \gamma(X) \leq 0$$
	which of course means
	$$\Delta \gamma(X) \geq 0$$
	as required.

	The second result is similar except we do not require the $Z_X$ condition. This is because of the -1 term from \eqref{my equation :)} which makes $\Delta \gamma(W) \leq 0$ when there are no cyclic flats strictly contained in $W$ with negative $\Delta \gamma$ values. Let $\mathcal{Y}$ be the collection of cyclic flats $Y$ in $M^+$ such that $Y \subset W$ and $\Delta \gamma(Y)<0$. Since $\Delta \gamma(Y) \neq 0$ we have from Lemma~\ref{first lemma} that $e \in Y$ for all $Y \in \mathcal{Y}$. We have two cases to consider, the first case is when $\mathcal{Y}$ is non-empty.

	Let $Y'$ be some cyclic flat of $\mathcal{Y}$ and let $\mathcal{Y}'$ be the set $\{Y' \lor Y, \forall Y \in \mathcal{Y}-Y\}$ if $|\mathcal{Y}|>1$, otherwise let $\mathcal{Y}'=\{Y'\}$. Either way note that since $\mathcal{Y}$ is non-empty we have that $\mathcal{Y}'$ is non-empty as well. We also have that since every cyclic flat of $\mathcal{Y}$ contains $e$ every cyclic flat of $\mathcal{Y}'$ contains $e$ as they all contain at least one cyclic flat of $\mathcal{Y}$. Note also that by hypothesis every cyclic flat of $\mathcal{Y}'$ is strictly contained in $W$ as they are all either joins of cyclic flats strictly contained in $W$ with negative $\Delta \gamma$ values, or simply cyclic flats that are already strictly contained in $W$. Finally we have that $Y' \subseteq \land \mathcal{Y}'$ and therefore $e \in \land \mathcal{Y}'$ and so by Lemma~\ref{meet lemma} we have
	$$-\Delta \gamma(W) = \sum_{\substack{Z \in \down(X)-X\\ Z \notin \down(\mathcal{Y}') }} \Delta \gamma(Z)$$

	However, since every cyclic flat of $\mathcal{Y}$ is contained within one or more cyclic flats of $\mathcal{Y}'$ we have that this sum contains no cyclic flats of $\mathcal{Y}$, nor any cyclic flats contained within them. Hence for any $Z$ in this sum $\Delta \gamma(Z)$ must be nonnegative. Hence all terms in the sum are nonnegative and so
	$$-\Delta \gamma(X) \geq 0$$
	which of course means
	$$\Delta \gamma(X) \leq 0$$
	as required. On the other hand if $\mathcal{Y}$ is empty then we have from Lemma~\ref{first lemma}
	$$-\Delta \gamma(W) - 1 = \sum_{Z \in \down(W)-W} \Delta \gamma(Z)$$
	however once again every term in the sum is nonnegative and so
	$$-\Delta \gamma(W) - 1 \geq 0$$
	$$\Delta \gamma(W) \leq 0$$
	as required.
\end{proof}

It will be more useful to view Corollary~\ref{hooray corollary} via its contrapositive.

\begin{corollary}
\label{contra}
    Let $X$ be a cyclic set in $M^+$ for which $\Delta \gamma(X)<0$ and there is some cyclic flat $Z_X$ of $M^+$ strictly contained in $X$ for which $\Delta \gamma (Z_X)>0$. Then there exists cyclic flats, $Z_0$ and $Z_1$ of $M^+$, both strictly contained in $X$ for which $\Delta \gamma(Z_0)>0$ and $\Delta \gamma(Z_1)>0$ and $Z_0 \lor Z_1$ is not strictly contained in $X$.

    Similarly let $W$ be a cyclic set in $M^+$ for which $\Delta \gamma(X)>0$. Then there exists cyclic flats, $Z_0$ and $Z_1$ of $M^+$, both strictly contained in $W$ for which $\Delta \gamma(Z_0)<0$ and $\Delta \gamma(Z_1)<0$ and $Z_0 \lor Z_1$ is not strictly contained in $W$.
\end{corollary}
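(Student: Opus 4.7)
The plan is to read Corollary~\ref{contra} as the logical contrapositive of Corollary~\ref{hooray corollary}. The two statements line up term for term, so the real work is just (i) confirming that the hypothesis $e \in X$ (respectively $e \in W$) of Corollary~\ref{hooray corollary} is automatic under the hypotheses of Corollary~\ref{contra}, and (ii) executing the contrapositive.

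For the first statement, suppose $X$ is a cyclic set in $M^+$ with $\Delta \gamma(X) < 0$, and $Z_X \subset X$ is a cyclic flat with $\Delta \gamma(Z_X) > 0$. Since cyclic flats are in particular flats, the second assertion of Lemma~\ref{first lemma} tells us that $\Delta \gamma(Z_X) \neq 0$ forces $e \in Z_X$, hence $e \in X$. I would then argue by contradiction: if every pair $(Z_0, Z_1)$ of cyclic flats of $M^+$ strictly contained in $X$ with both $\Delta \gamma$ values positive had $Z_0 \lor Z_1 \subset X$, then all hypotheses of the first part of Corollary~\ref{hooray corollary} would hold, forcing $\Delta \gamma(X) \geq 0$ and contradicting the assumption.

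For the second statement, let $W$ be a cyclic set in $M^+$ with $\Delta \gamma(W) > 0$. I would first verify $e \in W$: if not, the restrictions $M|W$ and $M^+|W$ coincide, so in particular $n_M(W) = n_{M^+}(W)$. Using the correspondence of Lemma~\ref{f lemma} restricted to the interval below $W$, I would match each cyclic flat of $M^+$ strictly contained in $W$ with a cyclic flat of $M$ of the same $\gamma$ value (any cyclic flat of $M^+$ whose image $f(Z)$ fails to be a cyclic flat of $M$ contains $e$ and so lies outside $W$). An induction on $|W|$ via the defining recursion~\eqref{og gamma} then yields $\Delta \gamma(W) = 0$, contradicting the hypothesis. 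With $e \in W$ established, the contrapositive of the second part of Corollary~\ref{hooray corollary} delivers cyclic flats $Z_0, Z_1 \subset W$ with negative $\Delta \gamma$ values and $Z_0 \lor Z_1 \not\subset W$, by the same contradiction scheme used in the first statement.

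The main obstacle is the clause $e \in W$ in the second statement: Lemma~\ref{first lemma} only treats cyclic sets containing $e$ and flats not containing $e$, leaving the case of a cyclic-but-not-flat set missing $e$. Bridging that gap is what requires invoking the lattice correspondence of Lemma~\ref{f lemma} rather than being a purely formal rewrite. Apart from that one step, both parts of Corollary~\ref{contra} are straightforward contrapositive restatements of Corollary~\ref{hooray corollary}.
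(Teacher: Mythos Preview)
Your identification of Corollary~\ref{contra} as the contrapositive of Lemma~\ref{hooray corollary} matches the paper exactly: the paper gives no proof beyond the sentence ``It will be more useful to view Corollary~\ref{hooray corollary} via its contrapositive.'' Your verification that $e\in X$ in the first part (via $e\in Z_X$ and the second clause of Lemma~\ref{first lemma}) is a clean addition the paper leaves implicit.

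Your argument for $e\in W$ in the second part, however, does not work. The claim that $\Delta\gamma(W)=0$ whenever $e\notin W$ is false for cyclic sets that are not flats. Your matching sends each cyclic flat of $M^+$ inside $W$ to a cyclic flat of $M$ inside $W$, but it is not onto: a cyclic flat $Z$ of $M$ with $Z\subset W$ may satisfy $Z+e\in\mathcal{Z}(M^+)$, so its preimage under $f$ lies \emph{outside} $W$. Such a $Z$ contributes $\gamma_M(Z)$ to the $M$-side of the recursion with no matching term on the $M^+$-side, and $\gamma_M(Z)$ need not vanish. Concretely, take $M^+$ of rank~$3$ on $\{e,a,b,c,d,f\}$ with $\{e,a,b,c\}$ a rank-$2$ line and $d,f$ in general position; then $W=\{a,b,c,d,f\}$ is cyclic in $M^+$, avoids $e$, and the cyclic flat $\{a,b,c\}$ of $M$ sits strictly inside $W$ while $\{e,a,b,c\}\not\subset W$, yielding $\Delta\gamma(W)=-1$.

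That said, the paper does not verify $e\in W$ either; it simply carries the hypothesis over implicitly. In every actual application of the second part (Corollary~\ref{cyclic contra} and the proof of Theorem~\ref{algorithm theorem}) the set $W$ is a cyclic \emph{flat}, so $e\in W$ follows directly from the second clause of Lemma~\ref{first lemma}. You have correctly located a genuine looseness in the statement; the right fix is to retain the hypothesis $e\in W$ from Lemma~\ref{hooray corollary} (or restrict to cyclic flats), rather than attempt to derive it for arbitrary cyclic sets.
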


Of course when $X$ and $W$ are cyclic flats any join of two cyclic flats that are both strictly contained within them is contained within them and so we obtain another more specific corollary for this case.

\begin{corollary}
\label{cyclic contra}
    Let $X$ be a cyclic flat of $M^+$ for which $\Delta \gamma(X)<0$ and there is some cyclic flat $Z_X$ of $M^+$ strictly contained in $X$ for which $\Delta \gamma(Z_X)<0$. Then there exists cyclic flats, $Z_0$ and $Z_1$ of $M^+$, both strictly contained in $X$ for which $\Delta \gamma(Z_0)>0$ and $\Delta \gamma(Z_1)>0$ and $Z_0 \lor Z_1 = X$.

    Similarly let $W$ be a cyclic flat of $M^+$ for which $\Delta \gamma(X)>0$. Then there exists cyclic flats, $Z_0$ and $Z_1$ of $M^+$, both strictly contained in $W$ for which $\Delta \gamma(Z_0)<0$ and $\Delta \gamma(Z_1)<0$ and $Z_0 \lor Z_1 = W$.
\end{corollary}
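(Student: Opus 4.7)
My plan is to derive Corollary~\ref{cyclic contra} directly from Corollary~\ref{contra}, using one small lattice observation: when $X$ is a cyclic flat of $M^+$ and $Z_0, Z_1$ are cyclic flats of $M^+$ with $Z_0, Z_1 \subseteq X$, the join in the lattice of cyclic flats satisfies $Z_0 \lor Z_1 = cl_{M^+}(Z_0 \cup Z_1) \subseteq cl_{M^+}(X) = X$. Thus the conclusion ``$Z_0 \lor Z_1$ is not strictly contained in $X$'' from Corollary~\ref{contra} automatically strengthens to ``$Z_0 \lor Z_1 = X$'' as soon as $X$ is itself a cyclic flat, and the analogous statement holds for $W$.

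The $W$ statement then follows immediately, since the second half of Corollary~\ref{contra} has no auxiliary hypothesis beyond $\Delta\gamma(W) > 0$: apply it to obtain cyclic flats $Z_0, Z_1 \subsetneq W$ with negative $\Delta\gamma$ values and $Z_0 \lor Z_1$ not strictly contained in $W$, and invoke the lattice observation to conclude $Z_0 \lor Z_1 = W$. The $X$ statement requires one extra step, because the first half of Corollary~\ref{contra} assumes the existence of a strictly contained cyclic flat $Z_X'$ with $\Delta\gamma(Z_X') > 0$, whereas the hypothesis here only supplies a $Z_X$ with $\Delta\gamma(Z_X) < 0$. To produce the required positive-$\Delta\gamma$ flat I would invoke equation~\eqref{my equation :)}: first note that $e \in X$, since otherwise Lemma~\ref{first lemma} would give $\Delta\gamma(X) = 0$, contradicting $\Delta\gamma(X) < 0$. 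Because $X$ is cyclic and contains $e$, \eqref{my equation :)} applies and yields $-\Delta\gamma(X) - 1 = \sum_{Z \in \down(X)-X} \Delta\gamma(Z)$. The function $\gamma$ is integer-valued, so $\Delta\gamma(X) \leq -1$ and the right-hand sum is nonnegative; since it already contains the strictly negative term $\Delta\gamma(Z_X)$, it must also contain at least one strictly positive term, which supplies a cyclic flat $Z_X'$ strictly contained in $X$ with $\Delta\gamma(Z_X') > 0$.

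With $Z_X'$ in hand, Corollary~\ref{contra} provides cyclic flats $Z_0, Z_1 \subsetneq X$ with $\Delta\gamma(Z_0), \Delta\gamma(Z_1) > 0$ and $Z_0 \lor Z_1$ not strictly contained in $X$; the lattice observation then upgrades this to $Z_0 \lor Z_1 = X$. I do not anticipate any serious obstacle: the substantive content has already been established in Corollary~\ref{contra}, and the only new ingredient is the simple integrality and sign-counting argument that extracts a positive summand of $\Delta\gamma$ from \eqref{my equation :)} given a negative one.
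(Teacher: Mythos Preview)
Your proposal is correct and follows the paper's own approach: the paper's proof is the single sentence preceding the corollary, observing that when $X$ (or $W$) is itself a cyclic flat, the join of two cyclic flats strictly contained in it must lie inside it, so ``not strictly contained'' from Corollary~\ref{contra} upgrades to equality.

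One point worth flagging: the hypothesis on $Z_X$ in the statement of Corollary~\ref{cyclic contra} reads $\Delta\gamma(Z_X)<0$, whereas Corollary~\ref{contra} requires $\Delta\gamma(Z_X)>0$. The paper's one-line derivation does not address this discrepancy (and indeed, in the later applications inside Theorem~\ref{algorithm theorem}, the authors always first establish a strictly contained cyclic flat with \emph{positive} $\Delta\gamma$ before invoking the corollary, which suggests the sign in the statement is a typo). You took the statement at face value and supplied the bridging argument via \eqref{my equation :)} and integrality to manufacture a positive-$\Delta\gamma$ flat from the negative one; that argument is sound, so your write-up actually covers the statement as literally written, which the paper's sketch does not.
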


\subsection{Constructing the Algorithm}

We now assume that $M^+$ is a strict gammoid and describe a polynomial-time algorithm that determines whether $M$ is also a strict gammoid. Recall that $M=M^+\backslash e$. We will do this by first checking whether there are any cyclic flats of $M$ with negative $\gamma_M$ values and if there aren't we will check the $\gamma_M$ values of all other sets in $M$ by examining a few important sets. To check the $\gamma_M$ values of cyclic flats in $M$ we let first let $\mathcal{F}$ be the set of all cyclic flats, $Z$, of $M^+$ for which $\gamma_{M^+}(F) \neq 0$. Note these values must be positive since $M^+$ is a strict gammoid and also note that by Theorem~\ref{without proof} we can simply examine the closed forward neighbourhoods of our graphical representation to obtain these in polynomial time. Recall that $|\mathcal{F}|$ is bounded by $n=|E(M^+)|$ by \eqref{bound}.

Next we let $\mathcal{F}'$ be the collection of joins of any two cyclic flats in $\mathcal{F}$. Note that there are at most $n(n-1)/2$ cyclic flats in $\mathcal{F}'$ and so there are $\mathcal{O}(n^2)$ cyclic flats in $\mathcal{F} \cup \mathcal{F}'$. Finally we let $\mathcal{F}''$ be the collection of joins of any two cyclic flats in $\mathcal{F}'$. Note that for similar reasons there are $\mathcal{O}(n^4)$ cyclic flats in $\mathcal{F} \cup \mathcal{F}' \cup \mathcal{F}''$. Let $\mathcal{E}$ be the collection of cyclic flats $Z$ of $M$ for which there is some $F \in \mathcal{F} \cup \mathcal{F}' \cup \mathcal{F}''$ such that $Z=F-e$. This means that $|\mathcal{E}| \leq |\mathcal{F} \cup \mathcal{F}' \cup \mathcal{F}''|$ and so $|\mathcal{E}|$ is also of order $\mathcal{O}(n^4)$. The cyclic flats in $\mathcal{E}$ are the only cyclic flats that we will be examining, which will allow our algorithm to run in polynomial time. We define a new function $\eta$ on $M$, which works the same as the $\gamma_M$ function, except it only counts cyclic flats in $\mathcal{E}$. Formally, this is
\[
\eta(X) = n_M(X) - \sum_{\substack{Z \in \mathcal{Z}(M)\\ Z \subset X\\ Z \in \mathcal{E}}} \eta(Z)
\]
where sets that contain no such $Z$ have $\eta$ value equal to their nullity.

Our algorithm will work as follows:
\begin{itemize}
    \item For each $Z \in \mathcal{E}$ check that $\eta(Z) \geq 0$.
    \item If there exists a $Z \in \mathcal{E}$ with $\eta(Z) <0$ then $M$ is not a strict gammoid.
    \item Otherwise; let $\mathcal{F}_M$ be the set of cyclic flats of $M$ with positive $\gamma_M$ values (which we have obtained at this point by Theorem~\ref{algorithm theorem}).
    \item Check whether there is a strict gammoid $M'$ such that $\mathcal{F}_M$ is the set of cyclic flats of $M'$ with positive $\gamma$ values.
    \item If $M'$ does not exist then $M$ is not a strict gammoid.
    \item Otherwise; check that $r_M(Z_0 \cup Z_1) = r_{M'}(Z_0 \cup Z_1)$ and $cl_M(Z_0 \cup Z_1) = cl_{M'}(Z_0 \cup Z_1)$ for all $Z_0, Z_1 \in \mathcal{F}_M$.
    \item If there exists $Z_0$ and $Z_1$ in $\mathcal{F}_M$ such that either $r_M(Z_0 \cup Z_1) \neq r_{M'}(Z_0 \cup Z_1)$ or $cl_M(Z_0 \cup Z_1) \neq cl_{M'}(Z_0 \cup Z_1)$ then $M$ is not a strict gammoid.
    \item Otherwise; $M$ is a strict gammoid and $M=M'$.
\end{itemize}

Since $\mathcal{E}$ has order $\mathcal{O}(n^4)$ and checking the $\eta$ value of any cyclic flat in $\mathcal{E}$ only requires checking the $\eta$ value of at most every other cyclic flat in $\mathcal{E}$ we have that the first step of the this algorithm takes at most order $\mathcal{O}(n^8)$ time. While this is admittedly not very fast this is still polynomial time and is the slowest part of the algorithm.

If we find all the cyclic flats of $M$ with positive $\eta$ value, then we have found all cyclic flats of $M$ with positive $\gamma$ values as we show in Theorem~\ref{algorithm theorem}. We can then use these cyclic flats to attempt to construct a strict gammoid $M'$ as described by Lemma~\ref{construct gammoid}. If this construction fails then $M$ is not a strict gammoid and if $M$ is a strict gammoid then $M=M'$.

The last step requires checking ranks and closures of unions of the cyclic flats in $\mathcal{F}_M$ but since $\mathcal{F}_M$ is bounded by $n$ the order of sets to check is $\mathcal{O}(n^2)$ and so once again we can perform this step in polynomial time. We now verify that the algorithm will produce the correct answer.

\begin{theorem}
\label{algorithm theorem}
    If there exists a cyclic flat $Z$ of $M$ with $\gamma_M(Z)<0$ then there exists a cyclic flat $Z' \in \mathcal{E}$ with $\eta(Z')<0$. If every cyclic flat $Z$ of $M$ has $\gamma_M(Z) \geq 0$ then for all $Z' \in \mathcal{E}$ we have $\gamma_M(Z')=\eta(Z')$ and for all cyclic flats $Z''$ of $M$ with $Z'' \notin \mathcal{E}$ we have $\gamma_M(Z'')=0$.
\end{theorem}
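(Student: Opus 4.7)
The plan is to prove the second (``all non-negative'') assertion first and then recycle its argument with a minimality hypothesis for the first assertion. The key device throughout is Corollary~\ref{cyclic contra}: whenever a cyclic flat of $M^+$ has nonzero $\Delta\gamma$, one can descend to strictly smaller cyclic flats whose $\Delta\gamma$ has the opposite sign and whose join recovers the original.

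For the second assertion I would first show that every cyclic flat $Z''$ of $M$ with $Z''\notin\mathcal{E}$ satisfies $\gamma_M(Z'')=0$. Supposing otherwise, $\gamma_M(Z'')>0$, and I set $Z^+=cl_{M^+}(Z'')$, a cyclic flat of $M^+$ by Lemma~\ref{f lemma.4}. Since $Z''\notin\mathcal{E}$ we have $Z^+\notin\mathcal{F}$, so $\gamma_{M^+}(Z^+)=0$ and $\Delta\gamma(Z^+)=\gamma_M(Z'')>0$. The second part of Corollary~\ref{cyclic contra} supplies $Z_0,Z_1\subsetneq Z^+$ with $\Delta\gamma(Z_i)<0$ and $Z_0\lor Z_1=Z^+$. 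The global hypothesis together with Lemma~\ref{f lemma.5} gives $\gamma_M(f(Z_i))\ge 0$, so $\Delta\gamma(Z_i)<0$ forces $\gamma_{M^+}(Z_i)>0$, i.e., $Z_i\in\mathcal{F}$. Then $Z^+=Z_0\lor Z_1\in\mathcal{F}'$, contradicting $Z''\notin\mathcal{E}$. The equality $\gamma_M(Z')=\eta(Z')$ for $Z'\in\mathcal{E}$ then follows by induction on cyclic flats below $Z'$: each is either in $\mathcal{E}$ (inductive hypothesis applies) or has $\gamma_M=0$ by the subclaim, so the $\gamma_M$-recurrence collapses onto the $\eta$-recurrence.

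For the first assertion, choose $Z$ to be a minimal cyclic flat of $M$ with $\gamma_M(Z)<0$. Minimality lets the previous argument run \emph{inside} $Z$, so every cyclic flat $Z_0\subsetneq Z$ outside $\mathcal{E}$ has $\gamma_M(Z_0)=0$, and hence $\gamma_M(Z)=\eta(Z)<0$. It therefore suffices to prove $Z\in\mathcal{E}$. Assume not and set $Z^+=cl_{M^+}(Z)$; then $Z^+\notin\mathcal{F}\cup\mathcal{F}'\cup\mathcal{F}''$, $\gamma_{M^+}(Z^+)=0$, and $\Delta\gamma(Z^+)<0$. First I would check that some cyclic flat $Z_X\subsetneq Z^+$ of $M^+$ has $\Delta\gamma(Z_X)<0$: if none did, then \eqref{my equation :)} would force some $Z_X\subsetneq Z^+$ with $\Delta\gamma(Z_X)>0$, and the second part of Corollary~\ref{cyclic contra} applied to $Z_X$ would produce cyclic flats strictly inside $Z^+$ with negative $\Delta\gamma$, a contradiction. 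Now the first part of Corollary~\ref{cyclic contra} applied to $Z^+$ gives $Z_0,Z_1\subsetneq Z^+$ with $\Delta\gamma(Z_i)>0$ and $Z_0\lor Z_1=Z^+$, and the second part applied to each $Z_i$ gives $W_0^{(i)},W_1^{(i)}\subsetneq Z_i$ with $\Delta\gamma(W_j^{(i)})<0$ and $W_0^{(i)}\lor W_1^{(i)}=Z_i$. Each $f(W_j^{(i)})\subsetneq Z$, so by minimality of $Z$ and Lemma~\ref{f lemma.5}, $\gamma_M(f(W_j^{(i)}))\ge 0$, which together with $\Delta\gamma(W_j^{(i)})<0$ forces $W_j^{(i)}\in\mathcal{F}$. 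Consequently $Z_i\in\mathcal{F}'$ and $Z^+=Z_0\lor Z_1\in\mathcal{F}''$, contradicting our assumption that $Z\notin\mathcal{E}$.

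The hard part is this final two-level descent: iterating Corollary~\ref{cyclic contra} once to decompose $Z^+$ as a join $Z_0\lor Z_1$ and again to decompose each $Z_i$ as a join $W_0^{(i)}\lor W_1^{(i)}$, while certifying that each of the four atomic factors $W_j^{(i)}$ actually lies in $\mathcal{F}$ rather than only in $\mathcal{F}\cup\mathcal{F}'\cup\mathcal{F}''$. It is precisely the alternation of signs of $\Delta\gamma$ across the two applications of the corollary, combined with minimality of $Z$ certifying $\gamma_M(f(W_j^{(i)}))\ge 0$, that keeps the join-depth within $\mathcal{F}''$ and thereby makes $\mathcal{E}$ large enough for the algorithm to detect the violation.
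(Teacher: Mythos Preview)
Your overall architecture matches the paper's: prove the ``all non-negative'' case via the second part of Corollary~\ref{cyclic contra}, then handle the negative case by choosing a minimal offender and performing a two-level join decomposition to land in $\mathcal{F}''$. Your proof of the second assertion, and the final two-level descent in the first assertion (decomposing $Z^+$ as $Z_0\lor Z_1$ and then each $Z_i$ as $W_0^{(i)}\lor W_1^{(i)}$, using minimality of $Z$ to force every $W_j^{(i)}$ into $\mathcal{F}$), are correct and essentially the paper's arguments.

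There is a genuine gap, however, at the moment you first invoke the first part of Corollary~\ref{cyclic contra} on $Z^+$. If you trace that statement back through Corollary~\ref{contra} to the contrapositive of Lemma~\ref{hooray corollary}, the hypothesis actually needed is the existence of some cyclic flat $Z_X\subsetneq Z^+$ with $\Delta\gamma(Z_X)>0$; the ``$<0$'' appearing in the first clause of Corollary~\ref{cyclic contra} is a misprint. Your preliminary step verifies the wrong sign, and even that verification is incomplete: when $\Delta\gamma(Z^+)=-1$, equation~\eqref{my equation :)} gives $0$ on both sides and does not force any strictly positive term. So as written you have not justified the decomposition $Z^+=Z_0\lor Z_1$ with $\Delta\gamma(Z_i)>0$, and the descent cannot begin.

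The paper closes this gap by a different mechanism. Since $\Delta\gamma(Z^+)\ne 0$ forces $e\in Z^+$, and $\gamma_{M^+}(Z^+)=0$, Lemma~\ref{containment lemma} produces a cyclic flat $F_Y\in\mathcal{F}$ with $e\in F_Y\subsetneq Z^+$. Applying Lemma~\ref{meet lemma} with $\mathcal{Y}=\{F_Y\}$ then gives
\[
-\Delta\gamma(Z^+)=\sum_{\substack{Z'\in\down(Z^+)-Z^+\\ Z'\notin\down(F_Y)}}\Delta\gamma(Z'),
\]
and since the left side is strictly positive, some term on the right is strictly positive. That term is the required $Z_X$ with $\Delta\gamma(Z_X)>0$, after which your two-level descent goes through unchanged.
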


\begin{proof}
    If $Z$ is a cyclic flat of $M$ then note that by Lemma~\ref{f lemma.1} there is a unique cyclic flat of $M^+$, which we will denote as $Z^+$ such that $Z^+ - e = Z$. If $Z \in \mathcal{E}$ then $Z^+ \in \mathcal{F} \cup \mathcal{F}' \cup \mathcal{F}''$.

    \begin{claim}
    \label{claim 1}
        Let $X$ be a cyclic flat of $M$ in $\mathcal{E}$. Then either $\eta(X) = \gamma_M(X)$ or there exists some cyclic flat $Y$ of $M$ with $Y \subset X$ such that $\gamma_M(Y)<0$.
    \end{claim}
    \begin{proof}
        To see this let $X$ be a minimal cyclic flat of $\mathcal{E}$ such that $\eta(X) \neq \gamma_M(X)$ and such that there does not exist any cyclic flat $Y$ of $M$ with $Y \subset X$ such that $\gamma_M(Y)<0$. First we examine the two equations for $\gamma_M(X)$ and $\eta(X)$. These are
        \[
        \gamma_M(X) = n_M(X) - \sum_{\substack{Z \in \mathcal{Z}(M)\\ Z \subset X}} \gamma_M(Z)
        \]
        and
        \[
        \eta(X) = n_M(X) - \sum_{\substack{Z \in \mathcal{Z}(M)\\ Z \subset X\\ Z \in \mathcal{E}}} \eta(Z).
        \]

        There are two ways that $\gamma$ and $\eta$ could differ. The first is that there is some $X' \in \mathcal{E}$ with $\gamma_M(X') \neq \eta(X')$, however, this would contradict the minimality of $X$. The second is that there is some $Z \notin \mathcal{E}$ with $\gamma_M(Z) \neq 0$. If neither of these occur then all the terms in both sums must be equal and there are no terms that are in one sum and not the other. Hence there must be some $Z \notin \mathcal{E}$ with $\gamma_M(Z) \neq 0$.

        Firstly if $\gamma_M(Z)<0$ then we simply set $Y=Z$ and we are done. Otherwise assume that $\gamma_M(Z)>0$. Since $Z \notin \mathcal{E}$ we must have that $Z^+ \notin \mathcal{F} \cup \mathcal{F}' \cup \mathcal{F}''$. This means $Z^+ \notin \mathcal{F}$ and so $\gamma_{M^+}(Z^+)=0$. Hence $\Delta \gamma(Z^+)>0$. Therefore, by Corollary~\ref{cyclic contra} there exist cyclic flats $Z_0$ and $Z_1$ in $M^+$, both strictly contained in $Z^+$, such that $\Delta \gamma(Z_0)<0$ and $\Delta \gamma(Z_1)<0$ and $Z^+= Z_0 \lor Z_1$. Then, since $Z^+$ is not in $\mathcal{F}'$ either, one of $Z_0$ or $Z_1$ is not in $\mathcal{F}$. Assume, without loss of generality, that $Z_0 \notin \mathcal{F}$. Then $\gamma_{M^+}(Z_0)=0$ and so since $\gamma_M(Z_0-e)-\gamma_{M^+}(Z_0)=\Delta \gamma(Z_0)<0$ we have that $\gamma_M(Z_0 -e) <0$. By Lemma~\ref{f lemma.5} this means that $Z_0 -e$ is a cyclic flat of $M$ and so we set $Y=Z_0 -e$ and we are done since $Z_0 - e \subset Z^+ -e = Z \subset X$.
    \end{proof}

    Next we prove the first statement of the theorem. To see this assume first that all $\eta$ values are nonnegative and let $Y$ be a cyclic flat of $M$ such that $\gamma_M(Y)<0$. We may assume $Y$ is minimal with the property that $\gamma_M(Y)<0$. If $Y \in \mathcal{E}$ then by Claim~\ref{claim 1} and the minimality of $Y$ we have that $\eta(Y) = \gamma_M(Y)$, a contradiction since all $\eta$ values are nonnegative. Hence we assume that $Y \notin \mathcal{E}$. In this case note that $Y^+ \notin \mathcal{F}$ and therefore $\gamma_{M^+}(Y^+)=0$ meaning that $\Delta \gamma(Y^+) < 0$, that is $\Delta \gamma(Y^+) \neq 0$. Therefore by Lemma~\ref{first lemma} we have that $e \in Y^+$ and so by Lemma~\ref{containment lemma} since $\gamma_{M^+}(Y^+)=0$ there is some cyclic flat $F_Y \in \mathcal{F}$ such that $F_Y \subset Y^+$ and $e \in F_Y$ (where we have strict containment because we know that $F_Y \neq Y^+$ since $Y^+ \notin \mathcal{F}$ and $F_Y \in \mathcal{F}$). We use Lemma~\ref{meet lemma} to get
    \[
    -\Delta \gamma(Y^+) = \sum_{\substack{Z \in \down(Y^+)-Y^+\\ Z \notin \down(F_Y)}} \Delta \gamma(Z)
    \]
    where since $\Delta \gamma(Y^+)<0$ there must be at least one $Z$ in this sum for which $\Delta \gamma(Z)>0$. Hence by Corollary~\ref{cyclic contra} again, there exist cyclic flats $Z_0$ and $Z_1$ in $M^+$, both strictly contained in $Y^+$ such that $\Delta \gamma(Z_0)>0$ and $\Delta \gamma(Z_1)>0$ and $Z_0 \lor Z_1 = Y^+$. We claim that both $Z_0$ and $Z_1$ are in $\mathcal{F}'$.

    Assume, without loss of generality, that $Z_0 \notin \mathcal{F}'$. Since $\Delta \gamma(Z_0)>0$ we have by Corollary~\ref{cyclic contra} that there exist cyclic flats $Z_2$ and $Z_3$ of $M^+$ both strictly contained in $Z_0$ such that $\Delta \gamma(Z_2)<0$ and $\Delta \gamma(Z_3)<0$ and $Z_2 \lor Z_3 = Z_0$. However, since $Z_0 \notin \mathcal{F}'$ it cannot be the join of two elements of $\mathcal{F}$ and so one of $Z_2$ or $Z_3$ is not in $\mathcal{F}$. We can assume, without loss of generality, that $Z_2 \notin \mathcal{F}$, and hence $\gamma_{M^+}(Z_2)=0$. However, since $\Delta \gamma(Z_2)<0$ this means that $\gamma_M(Z_2-e)<0$. Since $Z_2 -e \subset Z_0 -e \subset Y^+-e = Y$ this contradicts the minimality of $Y$. Hence we see that both $Z_0$ and $Z_1$ must be in $\mathcal{F}'$.

    This means that $Y^+ \in \mathcal{F}''$ which means $Y \in \mathcal{E}$ contradicting the fact that $Y \notin \mathcal{E}$. Hence we see that the algorithm will find negative $\eta$ values if any exist, otherwise it produces accurate $\gamma_M$ values for all the sets calculated by Claim~\ref{claim 1}. All that is left to show is that $\gamma_M(Z)=0$ for all cyclic flats $Z \notin \mathcal{E}$.

    To see this let $Y$ be a minimal cyclic flat of $M$ such that $\gamma_M(Y)>0$ but $Y \notin \mathcal{E}$. Then $Y^+ \notin \mathcal{F}$ and so $\gamma_{M^+}(Y^+)=0$ meaning $\Delta \gamma(Y)>0$ which means that, once again by Corollary~\ref{cyclic contra}, there exist cyclic flats $Z_0$ and $Z_1$ in $M^+$ such that both $\Delta \gamma(Z_0)<0$ and $\Delta \gamma(Z_1)<0$ and $Z_0 \lor Z_1 = Y^+$. Since all $\gamma_M$ values are nonnegative we know that for any cyclic flat $Z$ of $M^+$ if $\Delta \gamma(Z)<0$ we must have $\gamma_{M^+}(Z)>0$. Hence both $Z_0$ and $Z_1$ are in $\mathcal{F}$ and therefore $Y \in \mathcal{F}'$, a contradiction. Hence we obtain the result.
\end{proof}

Theorem~\ref{algorithm theorem} verifies that checking the $\eta$ values of sets in $\mathcal{E}$ is enough to tell us either that $M$ is not a strict gammoid, or what the $\gamma$ values of all cyclic flats in $M$ are. Since we only checked $\mathcal{E}$ there are at most $\mathcal{O}(n^4)$ cyclic flats of $M$ with positive $\gamma_M$ values. We first check these against Corollary~\ref{bound corollary} to ensure that there aren't too many. Then we can use Lemma~\ref{construct gammoid} to construct $M'$ from $\mathcal{F}_M$.

If $M'$ does not exist then we halt the algorithm and conclude that $M$ is not a strict gammoid. If $M$ is a strict gammoid then by Lemma~\ref{construct gammoid} we have that $M=M'$ so all that remains to check is whether or not $M=M'$.

Let the set of cyclic flats in $M$ with positive $\gamma_M$ values be $\mathcal{F}$. Since $M'$ is a strict gammoid the set of cyclic flats with positive $\gamma_{M'}$ values is precisely the set of closed forward neighbourhoods of $D$ which is precisely $\mathcal{F}$ by construction. What's more, each of these neighbourhoods has multiplicity exactly equal to its $\gamma_M$ value and therefore also equal to its $\gamma_{M'}$ value by Theorem~\ref{without proof}. Therefore for every $F$ in $\mathcal{F}$ we have that $F$ is a cyclic flat of both $M$ and $M'$ and $\gamma_M(F)=\gamma_{M'}(F)>0$. Since all other cyclic flats of $M$ have $\gamma_M$ value of 0 by the definition of $\mathcal{F}$ and all other cyclic flats of $M'$ have $\gamma_M'$ value of 0 by Theorem~\ref{without proof} and the fact that $M'$ has no sets with $\gamma_{M'}$ value less than 0 means that for all sets $X$ we have

\begin{equation} \label{F equation}
\sum_{Z \in \mathcal{Z}(M): Z \subset X} \gamma_M(Z) = \sum_{Z \in \mathcal{Z}(M'): Z \subset X} \gamma_M(Z).
\end{equation}

So if there exists some $X_M \subseteq E(M)$ for which $\gamma_M(X_M) \neq \gamma_{M'}(X_M)$ we must have that $n_M(X_M) \neq n_{M'}(X_M)$. We could then, of course, naively check the nullities of every set in both $M$ and $M'$ but this would, of course, be impossible to do in polynomial time. Instead we will locate a special set $X$ of $M$ with special properties.

\begin{lemma}
\label{final lemma}
    If $M$ is not a strict gammoid but every cyclic flat of $M$ has nonnegative $\gamma_M$ value then there exists a set $X$ of $M$ with negative $\gamma_M$ value such that $X$ contains two cyclic flats $Z_0$ and $Z_1$ of $M$, both with positive $\gamma_M$ values such that $X \subseteq (Z_0 \lor Z_1)$.
\end{lemma}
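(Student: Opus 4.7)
The plan is to find the required $X$ via a minimum-counterexample argument. Since $M$ is not a strict gammoid, Corollary~\ref{gamma} guarantees some set with $\gamma_M<0$. Using Lemma~\ref{find cyclic}, if $X$ has $\gamma_M(X)<0$ and is not cyclic then $X-L^*$ (for $L^*$ the coloops of $M|X$) has the same $\gamma_M$ value, and cannot itself be a cyclic flat (since all cyclic flats of $M$ have $\gamma_M\geq 0$), so the counterexample may be taken cyclic. I would then refine the choice by first picking a cyclic flat $W$ of $M$ that is minimal, in the lattice of cyclic flats, among those containing some cyclic subset $X$ with $\gamma_M(X)<0$. Among such $X\subseteq W$, I would take one of minimum cardinality. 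Write $\mathcal{Y}_X := \{Z\in\mathcal{Z}(M) : Z\subseteq X,\ \gamma_M(Z)>0\}$.

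Routine applications of the recursion $\gamma_M(X) = n_M(X) - \sum_{Z\in\mathcal{Z}(M),\, Z\subsetneq X}\gamma_M(Z)$ then yield the following. First, $\mathcal{Y}_X\neq\emptyset$, since otherwise $\gamma_M(X)=n_M(X)\geq 0$. Second, every $x\in X$ lies in some $Z\in\mathcal{Y}_X$: if not, then $x$ is not a coloop of $M|X$ (by cyclicity of $X$) and every cyclic flat in $X$ containing $x$ has $\gamma_M=0$, so the recursion gives $\gamma_M(X-x)=\gamma_M(X)-1<0$, contradicting the minimum cardinality of $X$. Consequently $X\subseteq\bigcup\mathcal{Y}_X$, so $cl_M(X)\supseteq \lor\mathcal{Y}_X$; combined with $\lor\mathcal{Y}_X\subseteq W$ and the minimality of $W$, this forces $\lor\mathcal{Y}_X = W = cl_M(X)$. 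Finally, $|\mathcal{Y}_X|\geq 2$: if $\mathcal{Y}_X=\{Z_0\}$ then $W=Z_0$ and $X=Z_0$, giving the contradiction $\gamma_M(X)=\gamma_M(Z_0)>0$.

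The main obstacle is producing a pair $Z_0,Z_1\in\mathcal{Y}_X$ with $W=Z_0\lor Z_1$; since $X\subseteq W$, this would finish the proof. The case $|\mathcal{Y}_X|=2$ is immediate, but when $|\mathcal{Y}_X|\geq 3$ the argument is delicate, because in a general lattice the top of a sub-join-semilattice need not be the join of two of its generators. I would argue by contradiction, assuming every pairwise join $Z_i\lor Z_j$ with $Z_i,Z_j\in\mathcal{Y}_X$ is strictly contained in $W$. Then each such join is a cyclic flat properly below $W$, so by the minimality of $W$ no cyclic subset of $Z_i\lor Z_j$ has $\gamma_M<0$. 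The goal is to combine this with the $\gamma_M$ recursion on $X$ and the fact that $\gamma_M(W)\geq 0$ to derive a contradiction, likely via a swapping or peeling argument in the spirit of Lemma~\ref{meet lemma} and Corollary~\ref{hooray corollary} that absorbs pairs of generators into their joins while tracking the $\gamma$ balance. Carrying this step through rigorously, and in particular verifying that $|\mathcal{Y}_X|$ can always be driven down to $2$, is where the main technical work lies.
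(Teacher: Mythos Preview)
Your approach diverges from the paper's in a way that leaves a genuine gap at exactly the point you flag as the ``main obstacle.'' The lemma is stated in the context of Section~4.3, where $M = M^+ \backslash e$ for a strict gammoid $M^+$, and the paper's proof uses this ambient structure essentially. You work entirely inside $M$, never invoking $M^+$ or the $\Delta\gamma$ machinery, and then propose to finish ``in the spirit of Lemma~\ref{meet lemma} and Corollary~\ref{hooray corollary}''; but those results are statements about $\Delta\gamma$ and cyclic flats of $M^+$, so they are not available in your purely intrinsic setup. Without $M^+$, the assertion that some pair $Z_0,Z_1\in\mathcal{Y}_X$ satisfies $Z_0\lor Z_1 = W$ is precisely the lattice-theoretic fact you yourself note can fail in general, and nothing in your minimality choices seems to force it.

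By contrast, the paper chooses $X_M$ to be a \emph{maximal} cyclic set with $\gamma_M(X_M)<0$ (not minimal), passes to $X_M+e$ in $M^+$, and uses Claim~\ref{Ze claim} together with Lemma~\ref{meet lemma} to produce a cyclic flat of $M^+$ inside $X_M+e$ with positive $\Delta\gamma$. Corollary~\ref{contra} then supplies $Z_0^+,Z_1^+\subset X_M+e$ with $\Delta\gamma>0$ whose join is not strictly contained in $X_M+e$; their images $Z_0,Z_1$ in $M$ have $\gamma_M>0$ by Lemma~\ref{f lemma.5}. If $X_M\subseteq Z_0\lor Z_1$ one is done; otherwise a direct nullity computation shows that either $\gamma_M(X_M\cap Y)<0$ (take $X=X_M\cap Y$) or $\gamma_M(X_M\cup Y)<0$, the latter contradicting maximality of $X_M$. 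The production of the pair $Z_0,Z_1$ thus comes from Corollary~\ref{contra} applied in $M^+$, not from any intrinsic argument in $M$; this is the missing idea in your proposal.
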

\begin{proof}
    We begin by examining the set $X_M$ which is a set of $M$ with negative $\gamma_M$ value. If $X_M$ is not cyclic then by Lemma~\ref{find cyclic} since $\gamma_M(X_M) \neq 0$ there exists a cyclic set contained in $X_M$ that also has negative $\gamma$ value. Therefore we may assume that $X_M$ is cyclic. We may therefore assume that $X_M$ is a maximal cyclic set of $M$ with a negative $\gamma_M$ value.

    \begin{claim}
    \label{Ze claim}
        There exists a cyclic flat $Z_e^+$ of $M^+$ such that $Z_e^+ \subset X_M+e$ and $e \in Z_e^+$.
    \end{claim}
    \begin{proof}
        Since $e \notin X_M$, for every cyclic flat, $Z^+$ of $M$, that is contained in $X_M$ we have that $\Delta \gamma(Z^+)=0$ by Lemma~\ref{first lemma}. Similarly since $e \notin X_M$ we have that $n_{M^+}(X_M)=n_M(X_M)$. Therefore
        \[
        n_{M^+}(X_M) - \sum_{Z \in \mathcal{Z}(M^+): Z \subset X_M} \gamma_{M^+}(Z) = n_M(X_M) - \sum_{Z \in \mathcal{Z}(M^+): Z \subset X_M} \gamma_M(Z-e).
        \]
        
        However, since $\gamma_M(X_M)<0$ and $\gamma_{M^+}(X_M) \geq 0$ we have that
        \[
        n_{M^+}(X_M) - \sum_{Z \in \mathcal{Z}(M^+): Z \subset X_M} \gamma_{M^+}(Z) \neq n_M(X_M) - \sum_{Z \in \mathcal{Z}(M): Z \subset X_M} \gamma_M(Z).
        \]

        So there must be some cyclic flat, $Z_e \in \mathcal{Z}(M)$ that is strictly contained in $X_M$ in $M$ but is not in $M^+$. This means that $Z+e=Z_e^+$ must be a cyclic flat of $M^+$ and therefore we obtain the result.
    \end{proof}

    Since $\gamma_{M^+}(X_M+e) \geq 0$ we have that $\Delta \gamma(X_M+e)<0$. By Claim~\ref{Ze claim} we have that there exists a cyclic flat, $Z_e^+ \subset X_M+e$ that contains $e$ which also means that $X_M+e$ is cyclic since $X_M$ is cyclic. Therefore by Lemma~\ref{meet lemma} we have
    \[
    -\Delta \gamma(X_M+e) = \sum_{\substack{Z \in \down(X_M+e)-(X_M+e)\\ Z \notin \down(Z^+_e)}} \Delta \gamma(Z).
    \]

    If all the terms in the sum are nonpositive then we contradict $\Delta \gamma(X_M+e) < 0$. Hence there exists some cyclic flat $Z_X^+$ of $M^+$ contained in $X_M+e$ such that $\Delta \gamma(Z_X^+)>0$. But then by Corollary~\ref{contra} there must exist cyclic flats $Z_0^+$ and $Z_1^+$ of $M^+$, both contained in $X_M+e$ with $\Delta \gamma(Z_0^+)>0$ and $\Delta \gamma(Z_1^+)>0$ such that $Z_0^+ \lor Z_1^+$ is not strictly contained in $X_M+e$.

    Since both $\Delta \gamma(Z_0^+)$ and $\Delta \gamma(Z_1^+)$ are non zero we have that $e \in Z_0^+$ and $e \in Z_1^+$ by Lemma~\ref{first lemma} and so we let $Z_0=Z_0^+-e$ and $Z_1=Z_1^+-e$. Also, since both $\Delta \gamma(Z_0^+)$ and $\Delta \gamma(Z_1^+)$ are positive and $\gamma_{M^+}(Z_0^+)$ and $\gamma_{M^+}(Z_1^+)$ are nonnegative (since $M^+$ is a strict gammoid) we see that $\gamma_M(Z_0)$ and $\gamma_M(Z_1)$ are positive and hence nonzero, meaning they are cyclic flats of $M$ by Lemma~\ref{f lemma.5}. Also by Lemma~\ref{f lemma.2} we have that $(Z_0^+ \lor Z_1^+) -e = Z_0 \lor Z_1$. If $X_M \subseteq (Z_0 \lor Z_1)$ then we set $X_M=X$ and we are done so we assume that $X_M \not \subseteq (Z_0 \lor Z_1)$. Note that since $(Z_0 \lor Z_1) \not \subset X_M$ there exists some element in $Z_0 \lor Z_1$ that is not in $X_M$.

    For ease of notation we set $Y=Z_0 \lor Z_1$. We have that $Y$ is a cyclic flat of $M$. Since both $Z_0$ and $Z_1$ are contained in $X_M$ we clearly have $Z_0 \cup Z_1 \subseteq X_M$ and therefore since $r_M(Y)=r_M(Z_0 \cup Z_1)$ we have that $r_M(Y)=r_M(Y \cap X_M)$. This gives us
    \[
    n_M(Y) - n_M(X_M \cap Y) = |Y| - r_M(Y) - |Y\cap X_M| + r_M(Y \cap X_M)
    \]
    which implies
    \begin{equation}\label{nullity equation 1}
    n_M(Y) - n_M(X_M \cap Y) = |Y - X_M|.
    \end{equation}

    Then since $r_M(Y) = r_M(Y \cap X_M)$ this means that $Y \in cl_M(X_M)$ and therefore $r_M(Y \cup X_M) = r_M(X_M)$ and so we have
    \[
    n_M(X_M \cup Y) - n_M(X_M) = |X_M \cup Y| - r_M(X_M \cup Y) - |X_M| + r_M(X_M)
    \]
    \begin{equation}\label{nullity equation 2}
        n_M(X_M \cup Y) = |Y-X_M| + n_M(X_M).
    \end{equation}

    We now examine $\gamma_M(Y) - \gamma_M(X_M \cap Y)$.
    \begin{multline*}
        \gamma_M(Y) - \gamma_M(X_M \cap Y) = n_M(Y) - \left( \sum_{Z \in \mathcal{Z}(M): Z \subset Y} \gamma_M(Z) \right) \\ 
        - n_M(X_M \cap Y) + \sum_{Z \in \mathcal{Z}(M): Z \subset (X_M \cap Y)} \gamma_M(Z).
    \end{multline*}

    We then split the first sum into those terms that are properly contained in $X_M \cap Y$ and those that are not.

    \begin{multline*}
        \gamma_M(Y) - \gamma_M(X_M \cap Y) = n_M(Y) - \sum_{\substack{Z \in \mathcal{Z}(M): Z \subset Y\\ Z \not \subset (X_M \cap Y)}} \gamma_M(Z) \\ - \left( \sum_{Z \in \mathcal{Z}(M): Z \subset (X_M \cap Y)} \gamma_M(Z) \right) -
        n_M(X_M \cap Y) + \sum_{Z \in \mathcal{Z}(M): Z \subset (X_M \cap Y)} \gamma_M(Z).
    \end{multline*}

    This allows us to cancel and obtain
    \[
    \gamma_M(Y) - \gamma_M(X_M \cap Y) = n_M(Y) - \left( \sum_{\substack{Z \in \mathcal{Z}(M): Z \subset Y\\ Z \not \subset (X_M \cap Y)}} \gamma_M(Z) \right) - n_M(X_M \cap Y).
    \]
    
    This rearranges to
    \[
    \sum_{\substack{Z \in \mathcal{Z}(M): Z \subseteq Y\\ Z \not \subset (X_M \cap Y)}} \gamma_M(Z) = \gamma_M(X_M \cap Y) + n_M(Y) - n_M(X_M \cap Y)
    \]
    where we have added the $\gamma_M(Y)$ term to the sum by increasing the range from $Z \subset Y$ to $Z \subseteq Y$. We now use \eqref{nullity equation 1} to obtain
    \[
    \sum_{\substack{Z \in \mathcal{Z}(M): Z \subseteq Y\\ Z \not \subset (X_M \cap Y)}} \gamma_M(Z) = \gamma_M(X_M \cap Y) + |Y-X_M|.
    \]

    If $\gamma_M(X_M \cap Y)<0$ then since $Z_0$ and $Z_1$ are both contained in $X_M \cap Y$ and $Y$ contains $X_M \cap Y$ we may set $X=X_M \cap Y$ and be done. Hence we may assume that $\gamma_M(X_M \cap Y)\geq 0$ and so we have
    \begin{equation}\label{abs value}
        \sum_{\substack{Z \in \mathcal{Z}(M): Z \subseteq Y\\ Z \not \subset (X_M \cap Y)}} \gamma_M(Z) \geq |Y-X_M|.
    \end{equation}

    We now examine $\gamma_M(X_M \cup Y)$.
    \[
        \gamma_M(X_M \cup Y) = n_M(X_M \cup Y) - \sum_{Z \in \mathcal{Z}(M): Z \subset (X_M \cup Y)} \gamma_M(Z).
    \]

    We split up the sum into terms that are strictly contained in $X_M$, the terms that are contained in $Y$ but not in $X_M$ and all the other terms, to get
    \begin{multline*}
        \gamma_M(X_M \cup Y) = n_M(X_M \cup Y) - \sum_{Z \in \mathcal{Z}(M): Z \subset X_M} \gamma_M(Z)\\
        -\sum_{\substack{Z \in \mathcal{Z}(M): Z \subseteq Y\\ Z \not \subset X_M}} \gamma_M(Z) - \sum_{\substack{Z \in \mathcal{Z}(M): Z \subset (X_M \cup Y)\\ Z \not \subset X_M: Z \not \subseteq Y}} \gamma_M(Z)
    \end{multline*}
    where we are including $Y$ in the second sum since $Y$ is a cyclic flat of $M$. We now substitute in \eqref{nullity equation 2} to get
    \begin{multline*}
        \gamma_M(X_M \cup Y) = |Y-X_M| + n_M(X_M)  - \sum_{Z \in \mathcal{Z}(M): Z \subset X_M} \gamma_M(Z) - \\
        \sum_{\substack{Z \in \mathcal{Z}(M): Z \subseteq Y\\ Z \not \subset X_M}} \gamma_M(Z) - \sum_{\substack{Z \in \mathcal{Z}(M): Z \subset (X_M \cup Y)\\ Z \not \subset X_M: Z \not \subseteq Y}} \gamma_M(Z).
    \end{multline*}

    But then the second and third terms on the right hand side simply add to make $\gamma_M(X_M)$ and so we have
    \[
    \gamma_M(X_M \cup Y) = \gamma_M(X_M) + |Y-X_M| - \sum_{\substack{Z \in \mathcal{Z}(M): Z \subseteq Y\\ Z \not \subset X_M}} \gamma_M(Z) - \sum_{\substack{Z \in \mathcal{Z}(M): Z \subset (X_M \cup Y)\\ Z \not \subset X_M: Z \not \subseteq Y}} \gamma_M(Z).
    \]
    
    However, since the range of the first sum is all the cyclic flats that are in $Y$ but not in $X_M$ we could equally write it as all the cyclic flats that are in $Y$ but not in $X_M \cap Y$ since this would exclude the same cyclic flats. This gives us
    \[
    \gamma_M(X_M \cup Y) = \gamma_M(X_M) + |Y-X_M| - \sum_{\substack{Z \in \mathcal{Z}(M): Z \subseteq Y\\ Z \not \subset (X_M \cap Y)}} \gamma_M(Z) - \sum_{\substack{Z \in \mathcal{Z}(M): Z \subset (X_M \cup Y)\\ Z \not \subset X_M: Z \not \subseteq Y}} \gamma_M(Z).
    \]

    We then use \eqref{abs value} to swap out the first sum for $|Y-X_M|$
    \[
    \gamma_M(X_M \cup Y) \leq \gamma_M(X_M) + |Y-X_M| - |Y-X_M| - \sum_{\substack{Z \in \mathcal{Z}(M): Z \subset (X_M \cup Y)\\ Z \not \subset X_M: Z \not \subseteq Y}} \gamma_M(Z)
    \]
    which cancels to give
    \[
    \gamma_M(X_M \cup Y) \leq \gamma_M(X_M) - \sum_{\substack{Z \in \mathcal{Z}(M): Z \subset (X_M \cup Y)\\ Z \not \subset X_M: Z \not \subseteq Y}} \gamma_M(Z)
    \]
    but every term on the right hand side is negative since all cyclic flats of $M$ have nonnegative $\gamma_M$ values and $\gamma_M(X_M)$ is negative by definition. Hence $\gamma_M(X_M \cup Y)<0$ but since $X_M \cup Y$ is the union of a cyclic set and a cyclic flat it is a cyclic set of $M$ with a negative $\gamma_M$ value, contradicting the maximality of $X_M$ and therefore we obtain the result.
\end{proof}

We can verify in polynomial time whether or not a set such as the one described in Lemma~\ref{final lemma} exists or not. To do this we examine every pair of cyclic flats in $M$ with positive $\gamma_M$ values and compute the rank of their union and the closure of their union in both $M$ and $M'$. If either of these disagree then clearly $M \neq M'$. Otherwise those two cyclic flats cannot be the two cyclic flats contained in $X$.

To see this let the two cyclic flats be $Z_0$ and $Z_1$. If $X$ is contained in $cl_M(Z_0 \cup Z_1)$ then since $(Z_0 \cup Z_1) \subseteq X$ we have that $r_M(X) = r_M(Z_0 \cup Z_1)$ and so if 
    \[
    r_{M'}(Z_0 \cup Z_1) = r_M(Z_0 \cup Z_1)
    \]
and
    \[
    cl_{M'}(Z_0 \cup Z_1) = cl_M(Z_0 \cup Z_1)
    \]
then 
    \[
    r_{M'}(X)= r_{M'}(cl_{M'}(Z_0 \cup Z_1)) = r_M(cl_M(Z_0 \cup Z_1))=r_M(X)
    \]
and so $n_M'(X)=n_M(X)$. But then by \eqref{F equation} we would have $\gamma_M(X)=\gamma_{M'}(X)$ but of course since $M'$ is a strict gammoid $\gamma_{M'}(X)\geq 0$ and so $X$ cannot be the set described by Lemma~\ref{final lemma}.

Fortunately there are only polynomially many such pairs to check as $Z_0$ and $Z_1$ are both in $\mathcal{F}$ and the size of $\mathcal{F}$ is bounded by $n_M(M)$ which is bounded by $n=E(M)$, so there are at most $n(n-1)/2$
pairs to check which can be done in polynomial time. Hence we have constructed an algorithm which will verify in polynomial time in the size of $M^+$ whether or not $M$ is a strict gammoid. Therefore we provide a proof of Theorem~\ref{theorem 2} and therefore a proof of Theorem~\ref{theorem 1}.
\section{Future Work}

Having put \textsc{Single Element Strict Gammoid Deletion} into $P$ we naturally examine the clear extension of the problem. Once again any strict gammoids appearing in problem instances will be assumed to be represented by directed graphs in the usual way.\\
\\
\textsc{Strict Gammoid Deletion}\\
\textsc{Instance}: A strict gammoid $M$ and a subset $X \subseteq E(M)$.\\
\textsc{Question}: Is $M\backslash X$ a strict gammoid?\\
\\
This problem is clearly the dual problem of \textsc{Transversal Contraction}. We may also examine the dual problem of \textsc{Transversal Dual}, which is\\
\\
\textsc{Strict Gammoid Dual}\\
\textsc{Instance}: A strict gammoid $M$.\\
\textsc{Question}: Is $M^*$ a strict gammoid?\\
\\
A polynomial-time algorithm to solve \textsc{Strict Gammoid Dual} would provide a solution to \cite[Problem 29]{DominicProblems} and \cite[Open Problem 6.1]{jobonotes} and so is of great interest. We examine how one might use similar methods to ours to obtain such a solution.

Let $M=M(D,V(D),S)$ be a strict gammoid. There exists a polynomial-time algorithm \cite{oxley} that obtains a representation of $M^*$ as a bipartite graph $G$. We can turn $G$ into a representation of a gammoid, $M_G$, by directing edges from $V$ into $A$ and letting $A$ be the target set \cite{oxley}. Now, sets of paths that link sets from $V$ into $A$ correspond to matchings in $G$. Everything we have done so far can be done in polynomial time, all that remains is to determine, hopefully in polynomial time, if $M_G$ is a strict gammoid.

If we add all vertices in $A$ to $E(M_G)$ we will certainly obtain a strict gammoid, $M_S$. Then all that remains is to determine whether $M_S\backslash A$ is a strict gammoid. That is, we have reduced our instance of \textsc{Strict Gammoid Dual} to an instance of \textsc{Strict Gammoid Deletion}. Furthermore this instance of \textsc{Strict Gammoid Deletion} has the property that $X$, the set we are deleting, is a basis of $M_S$. In fact $X$ is a {\em fundamental basis} of $M_S$. That is, a basis whose intersection with any cyclic flat spans the cyclic flat.

Therefore in order to obtain a polynomial time algorithm for \textsc{Strict Gammoid Dual} and therefore for \textsc{Transversal Dual} it suffices to find a polynomial time algorithm for \textsc{Strict Gammoid Deletion} in the case when $X$ is a fundamental basis. This problem shares certain similarities with the problem we have just solved as we outline below. For the proofs of these results see the author's thesis.

Let $M^B$ be a strict gammoid with fundamental basis $B$. Now let $M=M^B \backslash B$. We once more define $f:\mathcal{Z}(M^B) \rightarrow 2^{E(M)}$ as $f(Z) = Z-B$. We examine the new version of Lemma~\ref{f lemma}

\begin{lemma}
\label{new f lemma}
    The function $f$ has the following properties for all $Z_0, Z_1 \in \mathcal{Z}(M^B)$ and $Z \in \mathcal{Z}(M)$:
    \begin{enumerate}[label=\ref{new f lemma}.\arabic*.,ref=\ref{new f lemma}.\arabic*]
        \item \label{newf1} If $f(Z_0)=f(Z_1)$ then $Z_0 = Z_1$.
        \item \label{newf2} $cl_M(f(Z_0) \cup f(Z_1)) \subseteq f(cl_{M^B}(Z_0 \cup Z_1)$.
        \item \label{newf3} There exists some $Z' \in \mathcal{Z}(M^B)$ such that $f(Z')=Z$. 
        \item \label{newf4} Either $f(Z_0) \in \mathcal{Z}(M)$ or $\gamma_M(f(Z_0))=0$.
    \end{enumerate}
\end{lemma}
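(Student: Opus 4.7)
The plan is to mirror the proof of Lemma~\ref{f lemma}, adapting each part to the deletion of an entire fundamental basis $B$ rather than a single element $e$. For part~\ref{newf1}, the original argument — that the symmetric difference of two distinct cyclic flats cannot be a singleton — no longer applies, since $Z_0 \triangle Z_1$ can now be any subset of $B$. I would instead exploit the fundamental basis property: for any cyclic flat $Z$ of $M^B$, $Z \cap B$ is a basis of $M^B|Z$, and since $Z$ is cyclic this basis has no coloops. The key observation is
\[
Z \cap B = \{b \in B : b \in C(c,B) \text{ for some } c \in Z-B\},
\]
where $C(c,B)$ is the fundamental circuit of $c$ with respect to $B$ in $M^B$. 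The inclusion $\supseteq$ holds because $C(c,B) \subseteq cl_{M^B}((Z \cap B) + c) \subseteq Z$; for $\subseteq$, any $b \in Z \cap B$ is not a coloop of $M^B|Z$, so basis exchange with the basis $Z \cap B$ provides some $c \in Z - B$ with $b \in C(c, Z \cap B) = C(c, B)$. This shows that $Z \cap B$ is determined by $Z - B$, so $f$ is injective.

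For part~\ref{newf2}, only the easier inclusion survives. If $x \in cl_M(f(Z_0) \cup f(Z_1))$ then either $x \in f(Z_0) \cup f(Z_1) \subseteq Z_0 \cup Z_1$ or there is a circuit $C$ of $M$ with $x \in C \subseteq (f(Z_0) \cup f(Z_1)) + x$; in either case $x \in cl_{M^B}(Z_0 \cup Z_1)$ since every circuit of $M$ is a circuit of $M^B$, and $x \notin B$ gives $x \in f(cl_{M^B}(Z_0 \cup Z_1))$. Equality fails in general because a circuit of $M^B$ witnessing $x \in cl_{M^B}(Z_0 \cup Z_1)$ may meet $B$ and so fail to descend to $M$.

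For part~\ref{newf3}, given a cyclic flat $Z$ of $M$, I would set $Z' = cl_{M^B}(Z) - L^*$, where $L^*$ is the set of coloops of $M^B | cl_{M^B}(Z)$; by Proposition~\ref{max flat prop} this is a cyclic flat of $M^B$. The standard closure formula $cl_M(A) = cl_{M^B}(A) - B$ for $A \subseteq E(M)$ gives $cl_{M^B}(Z) - B = cl_M(Z) = Z$, so $Z' - B \subseteq Z$. Conversely, each $z \in Z$ lies in a circuit of $M$ contained in $Z$ (since $Z$ is cyclic in $M$), which is also a circuit of $M^B$ contained in $cl_{M^B}(Z)$, so $z \notin L^*$; hence $Z \subseteq Z' - B$, and $f(Z') = Z$. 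For part~\ref{newf4}, the same closure formula yields $cl_M(Z_0 - B) \subseteq cl_{M^B}(Z_0) - B = Z_0 - B$, so $f(Z_0)$ is a flat of $M$, and if it is not cyclic then $\gamma_M(f(Z_0)) = 0$ by Corollary~\ref{flat lemma}.

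The main obstacle is part~\ref{newf1}: without the fundamental basis hypothesis the statement would fail outright, as two distinct cyclic flats could easily share a non-basis part. This is the first place where the fundamental basis structure is genuinely needed, suggesting that extending deeper results such as Lemma~\ref{meet lemma} to this setting will rely much more heavily on this hypothesis, and that the loss of equality in part~\ref{newf2} will have to be compensated for elsewhere in any eventual algorithm.
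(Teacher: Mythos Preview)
The paper states Lemma~\ref{new f lemma} in the Future Work section without proof, so there is nothing in the paper to compare your argument against. Your proof is correct.

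For part~\ref{newf1} you correctly identify that the single-element trick from Lemma~\ref{f lemma}.1 fails, and you replace it with a genuine use of the fundamental basis hypothesis: since $Z\cap B$ is an independent spanning set of the cyclic flat $Z$, it is a basis of $M^B|Z$ with no coloops, and your basis-exchange argument shows that $Z\cap B=\bigcup_{c\in Z-B}\bigl(C(c,B)\cap B\bigr)$, so $Z\cap B$ is recoverable from $Z-B$ and $f$ is injective. Parts~\ref{newf2} and~\ref{newf4} follow immediately from the restriction closure identity $cl_M(A)=cl_{M^B}(A)-B$ together with Corollary~\ref{flat lemma}, exactly as you write.

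One small simplification is available in part~\ref{newf3}: the set $L^*$ you remove is in fact empty, so $Z'=cl_{M^B}(Z)$ already works, mirroring the proof of Lemma~\ref{f lemma}.4. Indeed every $z\in Z$ lies in a circuit of $M$ (hence of $M^B$) contained in $Z$, and every $b\in cl_{M^B}(Z)-Z$ lies in a circuit contained in $Z+b$ since $b\in cl_{M^B}(Z)\setminus Z$; thus $cl_{M^B}(Z)$ is already cyclic. Your detour through Proposition~\ref{max flat prop} is of course still valid, just unnecessary.
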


We also have a new version of Lemma~\ref{first lemma}

\begin{lemma}
\label{new first lemma}
	Let $X$ be a cyclic flat in $M^B$. Then
	\[\gamma_{M^B}(X)  - \gamma_M(f(X)) -r_M(f(X)) = \sum_{Z \in \mathcal{Z}(M^B): Z \subset X} (\gamma_M(f(Z)) - \gamma_{M^B}(Z)).\]
\end{lemma}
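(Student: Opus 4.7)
The plan is to mirror the argument for Lemma~\ref{first lemma}, with the single vertex $e$ replaced throughout by the fundamental basis $B$. The fundamental basis hypothesis enters in exactly one place — the comparison of nullities — which is also what forces the constant $1$ on the left-hand side of Lemma~\ref{first lemma} to be replaced by the term $r_M(f(X))$ here.

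First, I would expand both $\gamma_{M^B}(X)$ and $\gamma_M(f(X))$ using the defining recurrence for $\gamma$ and subtract, yielding
\[
\gamma_{M^B}(X) - \gamma_M(f(X)) = \bigl(n_{M^B}(X) - n_M(f(X))\bigr) - \sum_{Z \in \mathcal{Z}(M^B): Z \subset X} \gamma_{M^B}(Z) + \sum_{Z' \in \mathcal{Z}(M): Z' \subset f(X)} \gamma_M(Z').
\]

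Next, I would compute the nullity difference. Since $X$ is a cyclic flat of $M^B$ and $B$ is a fundamental basis of $M^B$, the set $B \cap X$ is independent in $M^B$ and spans $X$, giving $r_{M^B}(X) = |B \cap X|$. Combining this with $|f(X)| = |X| - |B \cap X|$ and the definition of nullity yields the identity $n_{M^B}(X) - n_M(f(X)) = r_M(f(X))$, which I would substitute into the expression above. This is the only step that uses the hypothesis that $B$ is a fundamental basis, and it is exactly what replaces the $-1$ of Lemma~\ref{first lemma} with $-r_M(f(X))$.

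Finally, I would reindex the sum over $\mathcal{Z}(M)$ as a sum over $\mathcal{Z}(M^B)$. Given $Z' \in \mathcal{Z}(M)$ with $Z' \subsetneq f(X)$, Lemma~\ref{new f lemma}.3 furnishes a unique (by Lemma~\ref{new f lemma}.1) lift $Z \in \mathcal{Z}(M^B)$ with $f(Z) = Z'$; taking $Z = cl_{M^B}(Z')$ and using that $X$ is a flat of $M^B$ containing $Z'$ gives $Z \subseteq X$, while $f(Z) = Z' \subsetneq f(X)$ rules out $Z = X$ via Lemma~\ref{new f lemma}.1, so $Z \subsetneq X$. Conversely, each $Z \in \mathcal{Z}(M^B)$ with $Z \subsetneq X$ either lifts some $Z' \in \mathcal{Z}(M)$ with $Z' \subsetneq f(X)$, or satisfies $f(Z) \notin \mathcal{Z}(M)$, in which case $\gamma_M(f(Z)) = 0$ by Lemma~\ref{new f lemma}.4. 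Adjoining these zero terms preserves the sum, yielding
\[
\sum_{Z' \in \mathcal{Z}(M): Z' \subset f(X)} \gamma_M(Z') = \sum_{Z \in \mathcal{Z}(M^B): Z \subset X} \gamma_M(f(Z)),
\]
and substituting back then rearranging gives the stated identity.

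The main obstacle is the nullity identity in the second step: without the fundamental basis hypothesis the cancellation $|X|-|B\cap X|$ across the two terms does not occur cleanly, and a correction depending on how $B$ fails to span $X$ would appear. Once that identity is in hand the remaining reindexing is a purely formal bookkeeping using Lemma~\ref{new f lemma}, directly analogous to the corresponding step of Lemma~\ref{first lemma}.
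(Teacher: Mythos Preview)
The paper states this lemma without proof (it appears in the Future Work section alongside Lemma~\ref{new f lemma}, both unproven), so there is no ``paper's own proof'' to compare against. Your argument is correct and is precisely the analogy with Lemma~\ref{first lemma} that the paper implicitly intends: expand both $\gamma$ terms, compute the nullity difference using the fundamental-basis hypothesis, and reindex the $\mathcal{Z}(M)$-sum over $\mathcal{Z}(M^B)$ via Lemma~\ref{new f lemma}. The nullity calculation $n_{M^B}(X)-n_M(f(X))=r_M(f(X))$ is exactly right and is, as you note, the sole place the fundamental-basis property is used; the reindexing step is handled cleanly by injectivity (Lemma~\ref{new f lemma}.1), surjectivity onto $\mathcal{Z}(M)$ via closures (Lemma~\ref{new f lemma}.3), and the vanishing of $\gamma_M$ on non-cyclic-flat images (Lemma~\ref{new f lemma}.4).
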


As before we can rewrite this as
\[
-\Delta \gamma(X) - r_M(f(X)) = \sum_{Z \in \down(X)-X} \Delta \gamma(Z).
\]

Unfortunately the next step, where we take elements out of $\down(X) -X$ and cancel things out does not appear to work in this case. What seems to happen is that terms regarding the ranks of cyclic flats and their intersections appear which cannot be cancelled out so easily. However, we are still hopeful that a solution to this problem may exist, or another algorithm may exist.
\section{Conclusion}

In this paper we have presented two important open problems in \textsc{Transversal Contraction} and \textsc{Transversal Dual}. We have then obtained a polynomial-time algorithm that solves \textsc{Transversal Contraction} in the case of contracting a single element.

It is difficult to be sure whether or not these techniques can be extended to the multiple element case. Unfortunately they do not extend directly and if the wrong elements are deleted even Lemma~\ref{f lemma} ceases to hold which means that different cyclic flats of the original matroid can map to the same cyclic flat in the new matroid. This would likely be a problem for any technique similar to ours as then having the sums of $\Delta \gamma$ terms line up would be more difficult.

In the case of simply deleting a fundamental basis things seem slightly more hopeful, however, as in this case Lemma~\ref{f lemma} mostly holds in the form of Lemma~\ref{new f lemma} and Lemma~\ref{first lemma} similarly holds in a slightly different form, that of Lemma~\ref{new first lemma}. While the other results do not hold we are nevertheless optimistic that either different versions of these results exist or that the problem will prove to be tractable using some other technique. Therefore we make the following conjecture:

\begin{conjecture}
    There exists a polynomial time algorithm that solves \textsc{Strict Gammoid Deletion} in the case when $X$ is a fundamental basis. If this algorithm obtains a positive answer then it also provides a strict gammoid representation of $M\backslash X$.
\end{conjecture}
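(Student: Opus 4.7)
My plan is to follow the template of Section~4, with Lemma~\ref{new f lemma} and Lemma~\ref{new first lemma} playing the roles of Lemma~\ref{f lemma} and Lemma~\ref{first lemma}. The crucial property of a fundamental basis $B$ --- that $|B \cap Z| = r_{M^B}(Z)$ for every cyclic flat $Z$ --- should let us control the new rank term $r_M(f(X))$ that obstructs the direct generalisation. Before doing anything else I would verify that $f$ still behaves like a lattice map in a useful sense; in particular, the analogue of Lemma~\ref{f lemma.3} for meets is not asserted in Lemma~\ref{new f lemma} and may need to be replaced by a weaker statement that nonetheless suffices to drive the main induction.

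The first major step is to prove an analogue of the Meet Lemma. The idea is to rewrite the $r_M(f(X))$ term using the fundamental basis identity $r_{M^B}(X) = |B \cap X|$, which forces a rigid relationship between $r_M(f(X))$, $n_{M^B}(X)$, and $|B \cap X|$. Then run the induction used in the proof of Lemma~\ref{meet lemma}: take a maximal element $Y_k$ of a collection $\mathcal{Y}$ of cyclic flats with $B \subseteq \land \mathcal{Y}$ (or some refinement of this containment), apply Lemma~\ref{new first lemma} to the sub-sum indexed by $\down(Y_k)$, and hope that the rank contributions telescope by inclusion--exclusion. If this works, we obtain an identity of the form
\[
-\Delta \gamma(X) - R(X,\mathcal{Y}) = \sum_{\substack{Z \in \down(X)-X\\ Z \notin \down(\mathcal{Y})}} \Delta \gamma(Z),
\]
for some explicit correction term $R(X,\mathcal{Y})$ that depends only on basis intersections. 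From this we should be able to derive analogues of Corollary~\ref{hooray corollary} and Corollary~\ref{cyclic contra}: extremal $\Delta \gamma$-values at $X$ force the existence of two strictly smaller cyclic flats of opposite-signed $\Delta \gamma$ whose join is controlled.

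With these structural corollaries in hand the algorithm mirrors Section~4. Let $\mathcal{F}$ be the collection of cyclic flats of $M^B$ with positive $\gamma_{M^B}$, extracted from a maximal directed representation of $M^B$ via Theorem~\ref{theorem 23}. Close $\mathcal{F}$ iteratively under pairwise joins to form a candidate set $\mathcal{E}$ of cyclic flats of $M$ via $f$. Define an $\eta$ function restricted to $\mathcal{E}$, verify $\eta \geq 0$, and if so construct a candidate strict gammoid $M'$ using Corollary~\ref{construct gammoid}. Finally, certify $M = M'$ by a Lemma~\ref{final lemma} analogue: any witness of failure should sit below a join of two cyclic flats of positive $\gamma_M$, and there are only polynomially many such pairs to check.

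The main obstacle is the Meet Lemma step. Each application of Lemma~\ref{new first lemma} inside the induction introduces a rank term $r_M(f(Y))$ that depends on the particular $Y$, so the clean cancellation of Lemma~\ref{meet lemma} fails unless a global telescoping identity holds along chains of cyclic flats. A secondary concern is the size of $\mathcal{E}$: in the single-element case two rounds of pairwise joins sufficed, but for a fundamental basis of size $|B|$ we may need up to $|B|$ rounds, giving an a priori exponential bound. To keep $\mathcal{E}$ polynomial we would need an argument, presumably via Corollary~\ref{bound corollary} applied to intermediate matroids, showing that only polynomially many iterated joins carry nonzero $\Delta \gamma$. Overcoming both obstacles appears to be the genuine difficulty hidden in the conjecture, and I expect that new structural ideas beyond direct analogy with Section~4 will be required.
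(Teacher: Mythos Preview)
There is no proof in the paper to compare against: the statement is explicitly a \emph{conjecture}, and the paper does not claim to resolve it. Section~5 and Section~6 make this clear --- the authors derive Lemma~\ref{new f lemma} and Lemma~\ref{new first lemma} as partial analogues, then state that ``the next step, where we take elements out of $\down(X)-X$ and cancel things out does not appear to work in this case'' because ``terms regarding the ranks of cyclic flats and their intersections appear which cannot be cancelled out so easily.'' They leave the problem open.

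Your proposal is therefore not a proof but a research plan, and you say as much in your final paragraph. The plan is a reasonable one and tracks exactly the route the paper suggests, and the two obstacles you flag are precisely the ones the paper identifies: the rank term $r_M(f(X))$ in Lemma~\ref{new first lemma} blocks the telescoping that made Lemma~\ref{meet lemma} work, and there is no evident bound on the number of join-closures needed to capture all cyclic flats with nonzero $\Delta\gamma$. Neither you nor the paper has a way past the first obstacle; your hope that the fundamental-basis identity $|B\cap Z|=r_{M^B}(Z)$ forces a usable inclusion--exclusion telescoping is plausible but unsubstantiated, and the paper's authors report that when they tried this the rank terms did not cancel. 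So the honest status is: this remains open, your outline matches the paper's intended approach, and the gap you name is the genuine gap.
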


As explained in Section 5 this means that we are also making the following conjecture:

\begin{conjecture}
    There exists a polynomial time algorithm that solves \textsc{Transversal Dual}. If this algorithm obtains a positive answer then it also provides a transversal representation of $M^*$.
\end{conjecture}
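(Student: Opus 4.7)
The plan is to follow the reduction described in Section~5: an instance of TRANSVERSAL DUAL on a transversal matroid with bipartite representation $G=(V_0,V_1,E)$ is converted in polynomial time into an instance of STRICT GAMMOID DELETION on a strict gammoid $M_S$ with ground set $V_0 \cup V_1$ from which the fundamental basis $V_1$ is to be deleted. Thus it suffices to produce a polynomial time algorithm for STRICT GAMMOID DELETION when the deleted set $B$ is a fundamental basis of the input strict gammoid $M^B$, together with a transversal representation of $(M^B\setminus B)^*$ in the positive case, which is immediate from Lemma~\ref{gammoid=cotrans} once a strict gammoid representation of $M^B\setminus B$ is in hand.

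First I would redevelop the machinery of Section~4 with respect to $B$ in place of the single element $e$. Writing $\Delta\gamma(X) := \gamma_M(f(X)) - \gamma_{M^B}(X)$ for $X \in \mathcal{Z}(M^B)$, Lemma~\ref{new first lemma} already yields the analogue of \eqref{my equation :)} with the scalar $1$ replaced by $r_M(f(X))$. The critical task is to produce a basis analogue of Lemma~\ref{meet lemma} and thereby of Corollary~\ref{cyclic contra}. The strategy is to iterate the expand-and-cancel argument of Lemma~\ref{meet lemma}, exploiting the fundamental property $|B\cap Z|=r_{M^B}(Z)$ for every cyclic flat $Z$ of $M^B$, which turns each leftover rank into the cardinality $|f(Z)|$ and so admits a clean inclusion-exclusion. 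I expect the resulting identity to take the shape
\[
-\Delta\gamma(X) - r_M(f(X)) \;=\; \sum_{\substack{Z \in \down(X)-X\\ Z \notin \down(\mathcal{Y})}} \Delta\gamma(Z) \;+\; \rho(X,\mathcal{Y}),
\]
where $\rho(X,\mathcal{Y})$ is an explicit cardinality correction read off the selected cyclic flats. From this I would mimic the proof of Corollary~\ref{cyclic contra} to show that if $\Delta\gamma(X)$ has the wrong sign, $X$ must be the join of two strictly smaller cyclic flats of opposite sign, up to an additive error controlled by $\rho$.

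Second, granted such a basis analogue of Corollary~\ref{cyclic contra}, I would run the algorithm template of Section~4. Start from $\mathcal{F}$, the collection of cyclic flats of $M^B$ with positive $\gamma_{M^B}$, which is polynomially bounded by \eqref{bound}; form $\mathcal{F}'$ and $\mathcal{F}''$ by iterating pairwise joins (perhaps a few more rounds than in Theorem~\ref{algorithm theorem}, bounded using Corollary~\ref{bound corollary} at each level). Set $\mathcal{E}=\{F-B : F \in \mathcal{F}\cup\mathcal{F}'\cup\cdots\}$ and define an $\eta$-function on $\mathcal{E}$ as in Section~4. A basis analogue of Theorem~\ref{algorithm theorem} would certify either a negative $\gamma_M$ value somewhere in $\mathcal{E}$ (in which case $M=M^B\setminus B$ is not a strict gammoid) or that the positive-$\gamma_M$ cyclic flats of $M$ are exactly the sets in $\mathcal{E}$ flagged by $\eta$. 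From this set $\mathcal{F}_M$, build the candidate strict gammoid $M'$ via Corollary~\ref{construct gammoid}, then verify $M=M'$ by checking ranks and closures of pairwise unions of members of $\mathcal{F}_M$, following the template of Lemma~\ref{final lemma}; its proof transports once the fundamental-basis version of Corollary~\ref{contra} is available.

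The hard part, exactly as flagged by the author, is the first step: forcing the rank terms produced by Lemma~\ref{new first lemma} to telescope under the meet-lemma iteration. I foresee two fallback strategies. The first is to pass, when analysing a cyclic flat $Z$, to the contraction $M^B / (B\cap Z)$, which turns the obstructing rank term back into the constant $1$; this costs a compatibility lemma comparing $\gamma_{M^B}$ with $\gamma_{M^B/(B\cap Z)}$ and a proof that these local analyses glue consistently on the lattice of cyclic flats. The second is to accept a weaker join-closure and instead refine $\mathcal{E}$ by adjoining joins along chains in the lattice of cyclic flats of $M^B$ intersecting $B$ in a prescribed way; the polynomial bound on $|\mathcal{E}|$ would then follow from the fact that $|B|\le|E(M^B)|$ together with \eqref{bound}. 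If neither works, the true obstruction is structural rather than technical, and resolving the 50-year conjecture will require a genuinely new ingredient about fundamental bases of strict gammoids.
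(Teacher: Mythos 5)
There is a genuine gap, and it is the one the paper itself flags: the statement you are asked to prove is stated in the paper only as a conjecture, and your proposal does not close the missing step either. Your argument correctly reproduces the Section~5 reduction (TRANSVERSAL DUAL to STRICT GAMMOID DELETION of a fundamental basis $B$), but everything after that hinges on a basis analogue of Lemma~\ref{meet lemma} and Corollary~\ref{cyclic contra}, which you do not prove --- you only write down an ``expected'' identity with an unspecified correction term $\rho(X,\mathcal{Y})$. The cancellation in Lemma~\ref{meet lemma} works precisely because the constant $-1$ in \eqref{my equation :)} is the same for every cyclic flat containing $e$, so substituting the identity for an inner flat $Y$ cancels the $\Delta\gamma(Y)$ terms exactly. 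In the fundamental-basis setting the correction in Lemma~\ref{new first lemma} is $r_M(f(X))$, which varies with $X$; when you substitute for an inner flat $Y$ you are left with terms of the form $r_M(f(X))-r_M(f(Y))$ together with rank-of-intersection contributions, and the paper explicitly reports that these ``cannot be cancelled out so easily.'' Asserting that a ``clean inclusion-exclusion'' will produce $\rho$ is exactly the unresolved obstruction, not a proof of it; your own fallback paragraph concedes this.

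Two further dependencies in your plan are also unsupported even granted such an identity. First, Lemma~\ref{final lemma} uses the equality in Lemma~\ref{f lemma.2} to conclude $(Z_0^+\lor Z_1^+)-e=Z_0\lor Z_1$, whereas in the basis case property \ref{newf2} gives only an inclusion, so the claim that the proof of Lemma~\ref{final lemma} ``transports'' does not follow without new work. Second, the polynomial bound on $\mathcal{E}$ in Theorem~\ref{algorithm theorem} comes from the fact that Corollary~\ref{cyclic contra} terminates after exactly two levels of joins; your ``perhaps a few more rounds'' has no bound at all until the basis analogue of that corollary is established. So the proposal is a reasonable research programme --- essentially the one the paper itself outlines in Sections~5 and~6 --- but it is not a proof of the conjecture.
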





\bibliographystyle{plain}
\bibliography{Matroidbib}

\end{document}